 %%%%%%%%%%%%%%%%%%%%%%%%%%%%%%%%%% Packages %%%%%%%%%%%%%%%%%%%%%%%%

\documentclass[a4paper,oneside,12pt,final]{article}

\usepackage{amsmath,amsfonts,amscd,amssymb,amsthm,dsfont}
\usepackage{longtable,geometry}
\usepackage[english]{babel}
\usepackage[utf8]{inputenc}
\usepackage[active]{srcltx}
\usepackage[T1]{fontenc}
\usepackage{graphicx}
\usepackage{pstricks}
\usepackage{enumerate}
\usepackage{nicefrac}
\usepackage{calrsfs}

 \usepackage{rotating}

\usepackage{framed}
\usepackage{xcolor}
\usepackage{tikz}
\usetikzlibrary{calc}
\usepackage{xifthen}
\usetikzlibrary{decorations.pathreplacing}

\usepackage{calc}

\usepackage{geometry}
\geometry{marginparwidth=2.5cm}
\usepackage[colorinlistoftodos, textwidth=\marginparwidth]{todonotes}

%%%%%%%%%%%%%%%%%%%%%%%%%% Hyperref and Showkeys %%%%%%%%%%%%%%%%%%%%
\usepackage[colorlinks]{hyperref}
\hypersetup{final}
\usepackage{xcolor}
\hypersetup{colorlinks, linkcolor={blue!40!black}, citecolor={blue!40!black}, urlcolor={blue!80!black}}

\usepackage{xcolor}

\usepackage[notref, notcite]{showkeys}

\usepackage[colorlinks]{hyperref}
\usepackage[figure,table]{hypcap} % Correct a problem with hyperref
\hypersetup{final}
\usepackage{xcolor}
\hypersetup{
    colorlinks,
    linkcolor={blue!50!black},
    citecolor={blue!50!black},
    urlcolor={blue!80!black}
}
%%%%%%%%%%%%%%%%%%%%%%%%%%%%%%%%%%%%%%%%%%%%%%%%%%%%%%%%%%%%%%%%%%%%%

%%%%%%%%%%%%%%%%%%%%%%%%% define symbos in medium size %%%%%%%%%%%%%%%%%%%%%%%%%%

%%%%%%%%%%%%%%%%%%%%%%%%%%%%%%%%%%%%%%%%%%%%%%%%%%%%%%%%%%%%%%%%%%%%%%%%%%%%%%%%%

%%%%%%%%%%%%%%%%%%%%%%%%%%%%%% Some math operators %%%%%%%%%%%%%%%%%%%%%%%%%%%%%%

\DeclareMathOperator\Var{Var}

%%%%%%%%%%%%%%%%%%%%%%%%%%%%%%%%%%%%%%%%%%%%%%%%%%%%%%%%%%%%%%%%%%%%%%%%%%%%%%%%%

\overfullrule 1mm

\usepackage{courier}
\usepackage{enumerate}

%%%%%%%%%%%%%%%%%%%%%%%%%%% array for equations alignment %%%%%%%%%%%%%%%%%%%%%%%
\usepackage{array}
\newcolumntype{e}{>{\displaystyle}r @{\,} >{\displaystyle}c @{\,} >{\displaystyle}l}
%%%%%%%%%%%%%%%%%%%%%%%%%%%%%%%%%%%%%%%%%%%%%%%%%%%%%%%%%%%%%%%%%%%%%%%%%%%%%%%%%

%%%%%%%%%%%%%%%%%%%%%%%%%%%%%%%%%%%%%% Defining new constants %%%%%%%%%%%%%%%%%%%
  \newcounter{constant}
  \newcommand{\nc}[1]{\refstepcounter{constant}\label{#1}}
  \newcommand{\uc}[1]{c_{\textnormal{\tiny \ref{#1}}}}
  \setcounter{constant}{-1}
%%%%%%%%%%%%%%%%%%%%%%%%%%%%%%%%%%%%%%%%%%%%%%%%%%%%%%%%%%%%%%%%%%%%%%%%%%%%%%%%%

%%%%%%%%%%%%%%%%%%%%%%%%%%%%%%%%%%%%%% Defining new constants %%%%%%%%%%%%%%%%%%%
  \newcounter{polyhomeo}
  \newcommand{\ncg}[1]{\refstepcounter{polyhomeo}\label{#1}}
  \newcommand{\ucg}[1]{g_{\textnormal{\tiny \ref{#1}}}}
  \setcounter{polyhomeo}{-1}
%%%%%%%%%%%%%%%%%%%%%%%%%%%%%%%%%%%%%%%%%%%%%%%%%%%%%%%%%%%%%%%%%%%%%%%%%%%%%%%%%

%%%%%%%%%%%%%%%%%%%%%%%%%%%%%%%%%%%%% mathclap %%%%%%%%%%%%%%%%%%%%%%%%%%%%%%%%%%
\def\clap#1{\hbox to 0pt{\hss#1\hss}}

%%%%%%%%%%%%%%%%%%%%%%%%%%%%%%%%%%%%%%%%%%%%%%%%%%%%%%%%%%%%%%%%%%%%%%%%%%%%%%%%%

%%%%%%%%%%%%%%%%%%%%% create a paragraph of text in displays %%%%%%%%%%%%%%%%%%%%
\usepackage{calc}

%%%%%%%%%%%%%%%%%%%%%%%%%%%%%%%%%%%%%%%%%%%%%%%%%%%%%%%%%%%%%%%%%%%%%%%%%%%%%%%%%

%%%%%%%%%%%%%%%%%%%%%%%%%%%%%% array paragraph %%%%%%%%%%%%%%%%%%%%%%%%%%%%%%%%%%
\def\arraypar#1{\parbox[c]{\textwidth - 2cm}{\centering #1}}
%%%%%%%%%%%%%%%%%%%%%%%%%%%%%%%%%%%%%%%%%%%%%%%%%%%%%%%%%%%%%%%%%%%%%%%%%%%%%%%%%

%%%%%%%%%%%%%%%%%%%%% create text displays %%%%%%%%%%%%%%%%%%%%%%%%%%%%%%%%%%%%%%
\usepackage{environ}
\NewEnviron{display}{
  \begin{equation}\begin{array}{c}
    \arraypar{\BODY}
  \end{array}\end{equation}
}
%%%%%%%%%%%%%%%%%%%%%%%%%%%%%%%%%%%%%%%%%%%%%%%%%%%%%%%%%%%%%%%%%%%%%%%%%%%%%%%%%

\colorlet{shadecolor}{blue!15}

\geometry{dvips,a4paper,margin=1.5in}

%%%%%%%%%%%%%%%%%%%%%%%%%%%%%%%%%%%%%%%%%%%%%%% environments %%%%%%%%%%%%%%%%%%%%%%%%%%%%%%
\newtheorem{theorem}{Theorem}[section]

\newtheorem{lemma}[theorem]{Lemma}
\newtheorem{proposition}[theorem]{Proposition}

\newtheorem{remark}[theorem]{Remark}

%%%%%%%%%%%%%%%%%%%%%%%%%%%%%%% Equations et proof %%%%%%%%%%%%%%%%%%%%%%%%%%%%%
\newcommand{\be}[1]{\begin{equation}\label{#1}}
\newcommand{\ee}{\end{equation}}
\numberwithin{equation}{section}

\newcommand{\ba}[1]{\begin{align}\label{#1}}
\newcommand{\ea}{\end{align}}
\numberwithin{equation}{section}

\newcommand{\ben}{\begin{equation*}}
\newcommand{\een}{\end{equation*}}
\numberwithin{equation}{section}

%\newenvironment{proof}[1][\relax]%s
%  {\paragraph{Proof\ifx#1\relax\else~of #1\fi}}%
%  {~\hfill$\square$\par\bigskip}

%%%%%%%%%%%%%%%%%%%%%%%%%%%%%%%%%%%%%%%%%%%%% cal font  %%%%%%%%%%%%%%%%%%%%%%%%%%%%%%%%%%%%%

%%%%%%%%%%%%%%%%%%%%%% frak font%%%%%%%%%%%%%%%%%%%%%

%%%%%%%%%%%%%%%%%%%%%%%%%%%% bb font%%%%%%%%%%%%%%%%%%

\newcommand{\bbE}{\mathbb{E}}

\newcommand{\bbN}{\mathbb{N}}

\newcommand{\bbP}{\mathbb{P}}

\newcommand{\bbZ}{\mathbb{Z}}
%%%%%%%%%%%%%%%%%%%%%%%% Sf Fonts %%%%%%%%%%%%%%%%

\newcommand{\sfC}{{\sf C}}

%%%%%%%%%%%%%%%%%%%%%%%%%%%%%%%%%% Greek letter %%%%%%%%%%%%%%%%%%%%

\newcommand{\ep}{\varepsilon}

%%%%%%%%%%%%%%%%%%%%%%%%%%%%%%%%%% Divers %%%%%%%%%%%%%%%%%%%%%%%

\newcommand{\lr}[1][]{\stackrel{#1}\rightarrow}

%%%%%%%%%%%%%%%%%%%%%%%%%%%%%%% Remarks + divers %%%%%%%%%%%%%%%%%%%%%%%%%%%%%%%

\newcommand{\rk}[1]{\bgroup\color{red}%
  \par\medskip\hrule\smallskip%
  \noindent\textbf{#1}%
  \par\smallskip\hrule\medskip\egroup}

%%%%%%%%%%%%%%%%%%%%%%%%%%%%%%%%%%%%%%%%%%%%% Informations %%%%%%%%%%%%%%%%%%%%%%%%%%%%%%
\title{The box-crossing property for critical two-dimensional oriented percolation}

\author{Duminil-Copin, H
\footnote{Institut des Hautes \'Etudes Scientifiques,
Le Bois-Marie 35, route de Chartres 91440 Bures-sur-Yvette France, {\itshape \texttt{duminil@ihes.fr}}, and
Département de mathématiques -- Université de Genève,
2-4 rue du lièvre, 64 1211 Genève 4, Switzerland}
\and Tassion, V. \footnote{Département de mathématiques -- Université de Genève,
2-4 rue du lièvre, 64 1211 Genève 4, Switzerland, {\itshape \texttt{vincent.tassion@unige.ch}}} \and Teixeira, A.
\footnote{Instituto Nacional de Matem\'atica Pura e Aplicada -- IMPA,
Estrada Dona Castorina 110, 22460-320, Rio de Janeiro RJ, Brazil, {\itshape \texttt{augusto@impa.br}}}
}

\date{\today}

%%%%%%%%%%%%%%%%%%%%%%%%%%%%%%%%%%%%%%%%%%%%% Main document %%%%%%%%%%%%%%%%%%%%

\begin{document}
\maketitle

\begin{abstract}
  We consider critical oriented Bernoulli percolation on the square lattice $\bbZ^2$. We prove a Russo-Seymour-Welsh type result which allows us to
 derive several new results concerning the critical behavior:
  \begin{itemize}
  \item We establish that the probability that the origin is connected to distance $n$ decays polynomially fast in $n$.
  \item We prove that the critical cluster of $0$ conditioned to survive to distance $n$ has a typical width $w_n$ satisfying $\varepsilon n^{2/5}\le w_n\le n^{1-\ep}$ for some $\ep>0$.
  \end{itemize} The sub-linear polynomial fluctuations contrast with the supercritical regime where $w_n$ is known to behave linearly in $n$. It is also different from the critical picture obtained for non-oriented Bernoulli  percolation, in which the scaling limit is non-degenerate in both directions. All our results extend to the graphical representation of the one-dimensional contact process.
\end{abstract}

\section{Introduction}
\label{s:intro}

\subsection{Motivation}

Oriented percolation, which is a directed version of classical Bernoulli percolation (introduced by Broadbent and Hammersley \cite{BroHam57} to understand percolation of a liquid in a porous medium), provides a model for a variety of physical systems in chemistry, solid state physics, and astrophysics. At a theoretical level, it is one of the simplest system exhibiting a phase transition, and has been as such an objet of intensive study in the last fifty years. It is also related to the geometric representation of the one-dimensional contact process introduced by Harris \cite{Har74,Har78} and is therefore interesting from the point of view of particles systems as well. We refer to \cite{Dur84} for a review on the subject and for further references.

The model is defined as follows. Consider the rotated (and rescaled) square lattice $\mathbb L:=\{(x_1,x_2)\in\bbZ^2:x_1+x_2\text{ even}\}$. Each vertex $x\in\mathbb L$ is connected to the vertices $x + (-1, 1)$ and $x + (1, 1)$ by two oriented edges, see Fig.~\ref{oriented_lattice}. Let $p\in[0,1]$. Each oriented edge is said to be {\em open} with probability $p$, and {\em closed} with probability $1-p$, independently of the state of the other edges. The law of the set of open edges is denoted by $\bbP_p$.
   \begin{figure}
    \label{oriented_lattice}
    \centering
 \includegraphics[width=0.50\textwidth]{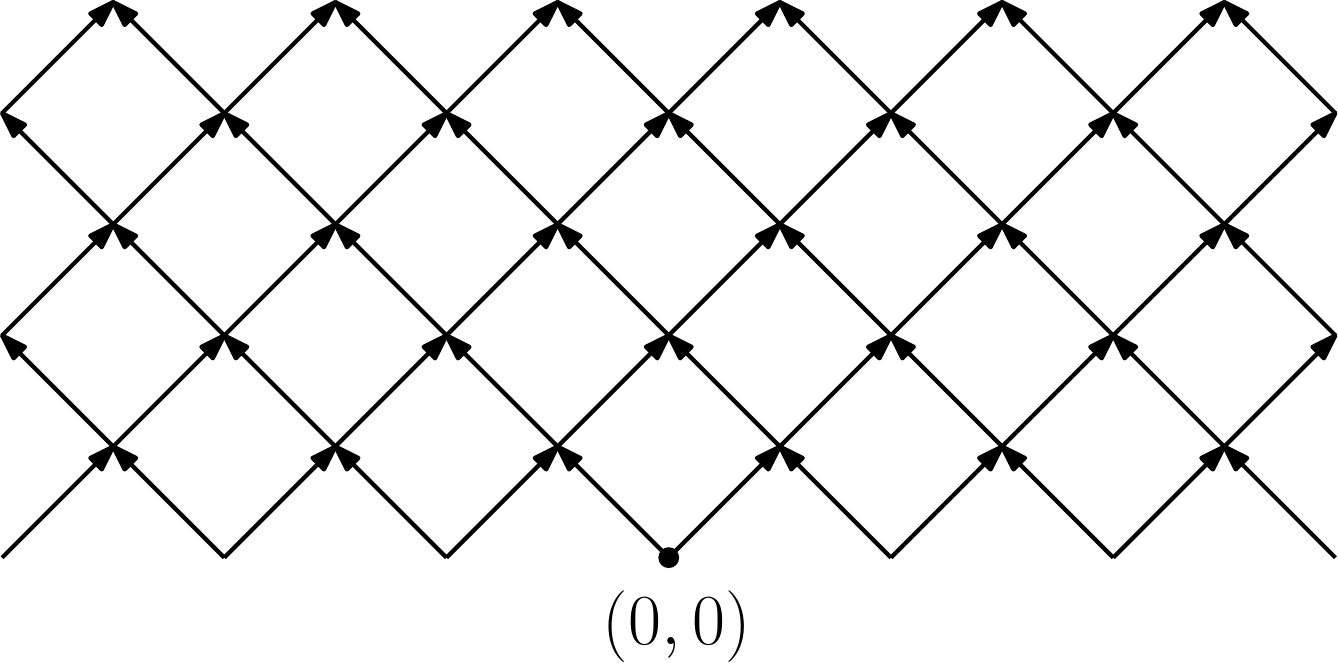}
    \caption{The lattice $\mathbb L$ with the oriented edges. }
  \end{figure}

  In oriented percolation, we study the connectivity properties of the random graph with vertex set $\mathbb L$, and edge set given by the open oriented edges. These open oriented edges should be understood as the set of edges allowing us to go upwards in the system. An {\em open path} is a collection of vertices $x_0,x_1,\dots,x_k$ such that the oriented edge $(x_i,x_{i+1})$ is open for every $0\le i<k$. Two vertices $x$ and $y$ are said to be {\em connected} (denoted $x\rightarrow y$) if there exists an open path starting at $x$ and ending at $y$. Let $\sfC_0$ be the connected component of the origin, i.e.~the set of vertices $x$ such that $0\rightarrow x$. In what follows, $0\rightarrow \infty$ denotes the event that $\sfC_0$ is infinite.

  One of the main interest of the model lies in the existence of a phase transition at a value $p_c\in(0,1)$ such that $\bbP_p(0\rightarrow \infty)=0$ if $p<p_c$, and above which $\bbP_p(0\rightarrow\infty)>0$ if $p>p_c$ (see \cite{BalBolSta94,BelLog06} for non-trivial lower and upper bounds on $p_c$). For $p<p_c$, connectivity properties are known to decay exponentially fast (see \cite{Gri81} for the original proof, and \cite{Dur84} for more details), while for $p>p_c$, the global shape of $\mathsf C_0$ converges to a cone of opening $\alpha(p)>0$ and Gaussian fluctuations on the boundary of $\mathsf C_0$, as proved in \cite{GalPre87,Kuc89}. An alternative proof of exponential decay for $p<p_c$ together with a proof of the mean-field bound $\bbP_p(0\rightarrow\infty)\ge c(p-p_c)$ were provided recently in \cite{DumTas15}. These results are just a few examples illustrating the more general motto that the subcritical and supercritical phases $p<p_c$ and $p>p_c$ are now well understood.

In \cite{DurGri83} and in \cite{BezGri90} respectively, the authors proved that $\alpha(p_c)=0$ and $\mathbb P_{p_c}[0\rightarrow \infty]=0$. These results naturally raise the question of quantitative bounds on the probability of being connected to distance $n$ and the typical width of large connected components at criticality.
In this paper, we
 provide polynomial upper bounds on these quantities (some lower bounds were proved previously in \cite{DurSchTan89}).

\subsection{Main results}

The main results of this paper deal with the critical phase $p=p_c$. The first theorem states that the probability that 0 is connected to distance $n$ decays polynomially fast. For $n\ge0$, define $\ell_n:=\mathbb Z\times\{n\}$.

\begin{theorem}\label{thm:1}
There exists $\varepsilon>0$ such that for every $n\ge1$,
$$\frac{\varepsilon}{n^{1/5}}\le \bbP_{p_c}(0\rightarrow\ell_n)\le \frac1{n^{\varepsilon}}.$$
\end{theorem}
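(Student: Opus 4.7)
The plan is to prove the upper and lower bounds separately, both leveraging the box-crossing (Russo--Seymour--Welsh) property that is the paper's main technical contribution.

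\textbf{Upper bound.} I would run a Kesten-style multi-scale argument. The box-crossing property provides a universal constant $\eta<1$ such that, at $p_c$, the probability of a ``long'' crossing of a suitably chosen parallelogram is at most $\eta$. On the event $\{0\to\ell_n\}$, the open path from $0$ to $\ell_n$ must traverse each of the levels $\ell_{2^j}$ for $j=1,\dots,\lfloor\log_2 n\rfloor$, producing a crossing of an associated parallelogram at scale $2^j$. Placing these parallelograms on disjoint horizontal strips, and using the fact that $\alpha(p_c)=0$ (so that the cluster is confined to a narrow cone around the vertical axis) to achieve enough decoupling between scales, one multiplies the bounds to obtain
\[
\bbP_{p_c}(0\to\ell_n)\le C\,\eta^{\log_2 n}= C\,n^{-\varepsilon},\qquad \varepsilon:=-\log_2\eta>0.
\]

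\textbf{Lower bound.} I would apply the second-moment method to $N_n:=|\sfC_0\cap\ell_n|$. Cauchy--Schwarz gives
\[
\bbP_{p_c}(0\to\ell_n)=\bbP_{p_c}(N_n\ge 1)\ge\frac{\bigl(\bbE_{p_c}[N_n]\bigr)^2}{\bbE_{p_c}[N_n^2]}.
\]
The denominator is controlled via a tree/BK-type decomposition: any configuration with $0\to x$ and $0\to y$ admits a last common vertex $z$, giving
\[
\bbP_{p_c}(0\to x,\,0\to y)\le\sum_{z}\bbP_{p_c}(0\to z)\,\bbP_{p_c}(z\to x)\,\bbP_{p_c}(z\to y).
\]
Summing over $x,y\in\ell_n$ and feeding in the polynomial upper bound from the first part, together with the width estimates that follow from the box-crossing property, should yield $\bbE_{p_c}[N_n^2]\le C\,n^{1/5}$. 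Combined with a uniform lower bound $\bbE_{p_c}[N_n]\ge c>0$ (obtained by a monotonicity argument along the level parameter $n$, reflecting the critical branching-process comparison), this produces the claimed lower bound $\varepsilon/n^{1/5}$.

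\textbf{Main obstacle.} The delicate step is extracting the exponent $1/5$ in the lower bound. A naive second-moment computation yields only $\bbP_{p_c}(0\to\ell_n)\ge c/n$; the improvement to $c/n^{1/5}$ requires a careful interplay between the RSW-based one-arm upper bound and the sharp control on the width of the cluster, optimized over the scale at which two open paths starting at $0$ diverge. Establishing the positive lower bound on $\bbE_{p_c}[N_n]$ without any a priori subadditive structure is also a subtle point, since at criticality one must rule out that the mean occupation at level $n$ vanishes.
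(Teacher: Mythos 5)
Both halves of your proposal have genuine gaps, and the lower bound in particular is not the route the paper takes. For the \emph{upper bound}, your multi-scale product argument hides a decoupling problem that the appeal to $\alpha(p_c)=0$ cannot fix. If the event at scale $2^j$ is measurable with respect to the edges of the horizontal strip $\bbZ\times[2^{j-1},2^j]$ alone (which is what makes your scales independent), then it must be of the form ``some prescribed box in the strip is crossed vertically''; but the path from $0$ to $\ell_n$ is only forced to cross \emph{that} box if the cluster is horizontally confined to its width, and the confinement event is not strip-measurable. So you lose either the uniform bound $\eta<1$ or the independence. The statement $\alpha(p_c)=0$ is a law-of-large-numbers fact with no rate: it gives neither quantitative confinement at a given scale nor any decoupling. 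The paper's mechanism is precisely what is missing here: at scale $m=r^k$ it uses the blocking event $E_m$ which combines the absence of horizontal crossings of two lateral rectangles of width $w_m$ and height $2m$ with the absence of a vertical crossing at width $3w_m$, so that $E_m$ \emph{deterministically} prevents $0\to\ell_n$ and has probability at least $\uc{c:box_crossing}^3$ by Theorem~\ref{t:box_crossing}; and independence across scales comes not from vertical separation but from the geometric growth of the width scale, $w_{rn}\ge 3w_n$, which must itself be proved (last step of the paper's proof, using the definition \eqref{eq:abca} of $w_n$ and the box-crossing property). Without the lateral blocking and without $w_{rn}\ge 3w_n$, your product of $\eta$'s does not go through.

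For the \emph{lower bound}, the second-moment method on $N_n=|\sfC_0\cap\ell_n|$ is a different strategy from the paper's, and as written it does not close: the estimates you propose to feed into the tree-graph bound (a one-arm bound $m^{-\varepsilon}$ with an unspecified, tiny $\varepsilon$, and width bounds between $m^{2/5}$ and $m^{1-\epsilon}$) are nowhere near enough to give $\bbE[N_n^2]\le Cn^{1/5}$; since $\bbE[N_n]$ is itself expected to grow polynomially, that second-moment bound is presumably false, and you concede the method as described only yields $c/n$. The uniform bound $\bbE[N_n]\ge c$ is also not ``monotonicity'': it requires submultiplicativity of $\bbE[N_n]$ together with a finite-size sharpness criterion (in the spirit of the first item of Lemma~\ref{t:Kesten}) to exclude exponential decay at $p_c$. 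The paper instead ties $\bbP(0\to\ell_n)$ to the width scale: decomposing a vertical crossing of $[0,w_n]\times[0,2n]$ according to the point it visits at height $n$ gives $\bbP(0\to\ell_n)\ge \uc{c:box_crossing}/\sqrt{w_n}$, i.e.\ \eqref{eq:47}; this is combined with the Durrett--Schonmann--Tanaka variance inequality \eqref{eq:1} and the equivalence $\sqrt{\Var(R_n)}\asymp w_n$ from Proposition~\ref{prop:equiv}, so that the chain \eqref{eq:48} forces $w_n\ge \uc{c:11}n^{2/5}$ (Proposition~\ref{cor:polybound}) and the exponent $1/5$ drops out of solving $w_n^{5/2}\gtrsim n$. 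Your proposal uses none of this structure, and at no point does it actually produce the exponent $1/5$.
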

The lower bound $\bbP_{p_c}(0\rightarrow \ell_n)\ge \frac{\varepsilon}{n^{1/4}}$ for all $n$ was proved in \cite{DurSchTan89}. Furthermore, the bound $\bbP_{p_c}(0\rightarrow \ell_n)\ge \frac{\varepsilon}{n^{1/5}}$ was also derived for infinitely many scales. We rely on the argument in \cite{DurSchTan89} for the lower bound. The novelty of this paper lies in the upper bound.

The second theorem deals with the typical width of the set of vertices connected to the origin. More precisely, let
$$R_n:=\max\{x\in\mathbb Z:\exists y\le 0\text{ even such that }(y,0)\rightarrow(x,n)\}.$$
Note that when $0\rightarrow \ell_n$, then $R_n$ is the first coordinate of the right-most point of $\sfC_0$. In some sense, the quantity $R_n$ can be understood as the width of a typical cluster that reaches distance $n$. The next theorem provides non-trivial polynomial bounds on $R_n$.
\begin{theorem}
\label{thm:2}
 \nc{c:2}
  There exists $\varepsilon>0$ such that for every $n\ge1$,
  \begin{equation}
    \label{eq:10}
    \varepsilon n^{2/5}\le \bbE_{p_c} ( R_n\,|\,0\lr  \ell_n)\le n^{1-\varepsilon}.
  \end{equation}
\end{theorem}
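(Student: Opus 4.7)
The argument splits in two parts, with the RSW-type box-crossing property established earlier in the paper as the central tool, combined with Theorem~\ref{thm:1}.

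\emph{Upper bound.} Every oriented edge of $\mathbb L$ has horizontal displacement $\pm 1$, so the deterministic estimate $R_n\le n$ holds on $\{0\to\ell_n\}$. Splitting the expectation at $m=n^{1-\delta}$ reduces the problem to showing that for some $\delta>0$,
\begin{equation*}
\bbP_{p_c}\bigl(R_n\ge n^{1-\delta}\,\big|\,0\to \ell_n\bigr)\le n^{-\delta}.
\end{equation*}
By the remark preceding the theorem, on $\{0\to\ell_n\}$ the quantity $R_n$ equals the rightmost point of $\sfC_0$ at height $n$, so the event in question is $\{\exists x\ge n^{1-\delta}:(0,0)\to(x,n)\}$. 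The BK inequality decomposes any such long slanted connection into two disjoint open paths of height $n/2$ each; by pigeonhole, at least one of them must have horizontal displacement $\ge n^{1-\delta}/2$, and thus forces a crossing of a tall thin tilted parallelogram. The box-crossing estimates of the paper make this event polynomially rarer than an ordinary one-arm event, while the other piece is controlled by Theorem~\ref{thm:1}. Summing over $x\ge n^{1-\delta}$ and dividing by $\bbP(0\to\ell_n)\ge c n^{-1/4}$ from \cite{DurSchTan89} yields the desired conditional estimate.

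\emph{Lower bound.} Let $X_n:=|\sfC_0\cap \ell_n|$. Since the cluster at level $n$ is contained in the interval $[L_n,R_n]$ on $\{0\to\ell_n\}$ with $L_n\stackrel{d}{=}-R_n$ by reflection,
\begin{equation*}
\bbE(R_n\,|\,0\to\ell_n)\ge \tfrac12\bbE(X_n\,|\,0\to\ell_n)-O(1),
\end{equation*}
and it is enough to show $\bbE(X_n\,|\,0\to\ell_n)\ge c n^{2/5}$. Writing $\bbE(X_n\,|\,0\to\ell_n)=\bbE(X_n)/\bbP(0\to\ell_n)$, the argument follows the second-moment strategy of \cite{DurSchTan89}: it combines a first-moment lower bound on $\bbE(X_n)$ (standard for the critical contact process), an upper bound on $\bbE(X_n^2)$ obtained by applying BK to pairs of open paths to level $n$ and then invoking the box-crossings of the paper to handle the space sum, and the DST lower bound $\bbP(0\to\ell_n)\ge c n^{-1/4}$. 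The exponent $2/5$ emerges from the resulting optimization.

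\emph{Main obstacle.} The technical heart of the proof is the upper bound: one must leverage the box-crossing property to produce a polynomial improvement over the one-arm exponent of Theorem~\ref{thm:1} whenever a connection is forced to displace substantially in the horizontal direction. The BK decomposition of long slanted paths together with quantitative crossing estimates on tilted parallelograms in the geometry of $\mathbb L$ is the delicate step; the lower bound is expected to be less involved, essentially a second-moment computation carried out with the help of the new RSW input.
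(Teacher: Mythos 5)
Your plan has a genuine gap at its core, and it is in the step you yourself identify as the "technical heart". For the upper bound you want to show that $\bbP_{p_c}(R_n\ge n^{1-\delta}\mid 0\to\ell_n)\le n^{-\delta}$ by arguing that a connection with horizontal displacement $n^{1-\delta}$ forces crossings that "the box-crossing estimates make polynomially rarer than an ordinary one-arm event". But the box-crossing property of this paper is stated only for axis-parallel rectangles at the scales $(w_n,n)$, $(3w_n,n)$, $(w_n,3n)$, where $w_n$ is an \emph{a priori unknown} width; the only displacement estimate it yields is of the form $\bbP(R_n\ge k w_n)\le(1-c)^k$, i.e.\ exponential decay in units of $w_n$. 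This gives a polynomial gain over the one-arm probability only if one already knows $w_n\le n^{1-\epsilon}$ --- which is essentially the statement you are trying to prove. Nothing in the paper (nor in your sketch) provides a crossing estimate for "tall thin tilted parallelograms" or for displacements measured in absolute units rather than in units of $w_n$, so the argument is circular as it stands. The paper closes exactly this gap by a different idea: it proves the strict subadditivity $\bbE(R_{2n}^+)\le(2-c)\,\bbE(R_n^+)$, by constructing (via the box-crossing property and FKG/independence) an event of uniformly positive probability on which $R_{2n}^+\le R_{0,n}^++R_{n,2n}^+-\tfrac12 w_n$, and then iterating over dyadic scales; combined with the comparison $\bbE(R_n^+)\asymp w_n\asymp\bbE(R_n\mid 0\to\ell_n)$ (Proposition~\ref{prop:equiv}) this gives the $n^{1-\varepsilon}$ bound. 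That subadditivity mechanism, or some substitute for it, is what your proposal is missing.

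The lower bound is also not carried out. The paper's route is: the renewal-structure inequality $\Var(R_n)\ge c\,n\,\bbP(0\to\ell_n)$ from \cite{DurSchTan89}, the bound $\bbP(0\to\ell_n)\ge c/\sqrt{w_n}$ (obtained from the box-crossing property by summing two-arm probabilities over the middle line of a $w_n\times 2n$ box), and the equivalence $\sqrt{\Var(R_n)}\asymp w_n\asymp\bbE(R_n\mid 0\to\ell_n)$; solving $w_n^2\gtrsim n\,w_n^{-1/2}$ gives the exponent $2/5$. Your alternative via $X_n=|\sfC_0\cap\ell_n|$ rests on two unproven inputs: a first-moment lower bound on $\bbE(X_n)$ at criticality (this is not a standard fact for critical oriented percolation/contact process) and a second-moment upper bound on $\bbE(X_n^2)$, and you do not indicate how the optimization would produce $n^{2/5}$ rather than some weaker exponent. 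Dividing by $\bbP(0\to\ell_n)\ge c n^{-1/4}$ alone does not reproduce the paper's exponent, since the $2/5$ in the paper comes from the self-consistent relation between $w_n$, $\Var(R_n)$ and $\bbP(0\to\ell_n)$, not from the $n^{-1/4}$ bound. So both halves of the proposal need substantive new arguments before they constitute a proof.
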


Again, the lower bound $\bbE_{p_c} ( R_n\,|\,0\lr \ell_n)\ge \varepsilon n^{1/2}$ for all $n$ (and $\varepsilon n^{2/5}$ for infinitely many scales) was proved in \cite{DurSchTan89}. The novelty of the paper lies in the upper bound. We wish to highlight the fact that the existence of $\varepsilon>0$ in the $n^{1-\varepsilon}$ upper bound is maybe the most important feature of the previous theorem. It implies that large connected components are rather thin. We should mention that it was shown that $\alpha(p)\searrow 0$ as $p\searrow p_c$, thus suggesting that the scaling limit indeed needs to be rescaled differently in the $x$ and $y$ coordinates (contrarily to the non-oriented cases where both directions play symmetric roles). The quantitative polynomial bound seems to be new.

We believe that the techniques developed to prove the two previous theorems should be very useful to study more delicate properties of the critical phase. In order to emphasize the technique, we isolate one important technical statement, called the {\em box-crossing property}, which we consider as one of the main new inputs of the paper.

The statement of the box-crossing property involves crossing probabilities. A {\em vertical crossing} of a box $B=[a,b]\times[c,d]$ is an open path of vertices {\em in }$B$ from the bottom $[a,b]\times\{c\}$ to the top $[a,b]\times\{d\}$ of $B$. A {\em left-right crossing} is an open path of vertices from the left $\{a\}\times[c,d]$ to the right $\{b\}\times[c,d]$ of $B$. Similarly, one define a {\em right-left crossing} of $B$. If such a vertical (resp.~left-right, right-left) crossing exists, we say that $B$ is {\em crossed vertically} (resp.~from left to right, from right to left). Define
\begin{align*}
V_p(m,n)&:=\bbP_p([0,m]\times[0,n]\text{ is crossed vertically}),\\
H_p(m,n)&:=\bbP_p([0,m]\times[0,n]\text{ is crossed from left to right}).
\end{align*}
By symmetry, $H_p(m,n)$ is also the probability that $[0,m]\times[0,n]$ is crossed from right to left. We are now ready to state our main technical statement.
\nc{c:box_crossing}
\begin{theorem}[the box-crossing property]
  \label{t:box_crossing}
  There exist a sequence of integers $(w_n)_{n\ge1}$ and a constant $\uc{c:box_crossing} > 0$ such that
  \begin{align}
  %  \label{e:H_hard}
    %& H(3w_n, n) \geq \uc{c:box_crossing},\\
  %  \label{e:V_hard}
    %& V(w_n, 3 n) \geq \uc{c:box_crossing},\\
  %  \label{e:H_easy}
    &\uc{c:box_crossing}\le H_{p_c}(3w_n,n)\le H_{p_c}(w_n, 3 n) \leq 1 - \uc{c:box_crossing}.\\
   % \label{e:V_easy}
    & \uc{c:box_crossing}\le V_{p_c}(w_n,3n)\le V_{p_c}(3 w_n, n) \leq 1 - \uc{c:box_crossing}.
  \end{align}
\end{theorem}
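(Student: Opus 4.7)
I would choose the widths $(w_n)$ adaptively as a threshold scale for vertical crossings, so that the two middle inequalities in the theorem become automatic, and then derive the four outer inequalities by a combination of FKG gluing (for the lower bounds) and contradiction with $\bbP_{p_c}(0\to\infty)=0$ (for the upper bounds). Concretely, I would set
\[
w_n := \min\bigl\{w \ge 1 : V_{p_c}(w, n) \ge 1/2\bigr\}.
\]
The two middle inequalities then follow immediately from the monotonicity of $V_{p_c}$ (increasing in width, decreasing in height) and of $H_{p_c}$ (decreasing in width, increasing in height), so only the four outer inequalities remain.

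\emph{Lower bounds.} To obtain $V_{p_c}(w_n, 3n) \ge c$, I would stack three vertical crossings of the slabs $[0, w_n] \times [kn, (k+1)n]$ for $k = 0, 1, 2$, each of which has probability at least $1/2$ by the choice of $w_n$, and connect them at the two interfaces using horizontal crossings of thin glue regions of height $O(w_n)$; these latter events have positive probability after a horizontal symmetrization using the FKG square-root trick applied to events already produced by the definition of $w_n$. Combining the five events by FKG yields a vertical crossing of the tall box with probability at least some $c > 0$. The bound $H_{p_c}(3w_n, n) \ge c$ is obtained by an analogous gluing but in the perpendicular direction: vertical crossings of three adjacent strips of width $w_n$ are combined via ``diagonal bridges'' that exploit the oriented structure, since every open up-right edge shifts a path by $+1$ horizontally at the cost of one height unit.

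\emph{Upper bounds and main obstacle.} The upper bounds $V_{p_c}(3w_n, n)\le 1-c$ and $H_{p_c}(w_n, 3n) \le 1 - c$ I would obtain by contradiction: if either were $1 - \epsilon$ with $\epsilon$ arbitrarily small, FKG stacking together with the lower bounds from the previous step would produce an open connection from the origin to the line $\ell_{Nn}$ with probability bounded away from $0$ uniformly in $N$, yielding $\bbP_{p_c}(0 \to \infty) > 0$ and contradicting the Bezuidenhout--Grimmett result recalled in the introduction. The hardest step will be the gluing in the lower bounds: unlike non-oriented percolation, where duality and rotation invariance make combining crossings nearly automatic, the oriented structure forbids arbitrary turns in paths, so the glue regions and the diagonal bridges must be constructed explicitly and shown to succeed with positive probability via a careful conditioning/FKG argument. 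Making this step work for a single scale $w_n$, uniformly in $n$, is the main novelty I would need to overcome.
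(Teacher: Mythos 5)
There is a genuine gap, and it sits exactly where you yourself flag ``the main novelty I would need to overcome'': the production of hard-direction crossings. Your definition $w_n=\min\{w: V_{p_c}(w,n)\ge 1/2\}$ pins down only the vertical crossing probability and gives no information whatsoever about $H_{p_c}$ at comparable scales. But every one of your gluing steps needs exactly that information: to stack vertical crossings of $[0,w_n]\times[kn,(k+1)n]$ you must join the endpoint of one crossing (on a horizontal line) to the start of the next, i.e.\ you need a left-right crossing of a slab of width $\asymp w_n$, and to prove $H_{p_c}(3w_n,n)\ge c$ you need left-right crossings outright. A priori, at criticality it could be that vertical crossings of $[0,w_n]\times[0,n]$ are likely while all of them stay laterally confined, making $H$ at scale $w_n$ tiny; ``horizontal symmetrization plus the square-root trick applied to events produced by the definition of $w_n$'' does not rule this out, because the defining event is vertical, not horizontal. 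The paper's solution has two ingredients you are missing: (a) a genuine RSW theorem (its Theorem~\ref{t:RSW}, proved by conditioning on the top-most crossing, reflecting, and applying the square-root trick), which lower-bounds hard-direction crossings in terms of a \emph{minimum} of $V$ and $H$ at a shorter height; and (b) a definition of $w_n$ designed to couple the two directions, namely $w_n=\inf\{m: H(\alpha m,\varepsilon n)\le V(m,\alpha\varepsilon n)\}$, so that at the scale $w_n$ whichever of $H,V$ is bounded below (or above) forces the same for the other, and the minimum entering the RSW input can be controlled. Without (b), even granting an RSW statement, you cannot bound $\min\{V,H\}$ from your one-sided definition.

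The upper-bound step is also not correct as written. ``FKG stacking'' of order $N$ crossing events, each with probability bounded by a constant strictly less than $1$, yields a bound of the form $c^N\to 0$, not a bound uniform in $N$, so you do not reach a contradiction with $\bbP_{p_c}(0\to\infty)=0$ this way. What is actually needed is a finite-size (sharpness) criterion: if both easy-direction crossing probabilities exceed $1-\eta$ for a small universal $\eta$, then one compares with a highly supercritical $3$-dependent percolation (Liggett--Schonmann--Stacey, or a contour argument) to deduce $p>p_c$; symmetrically, if both are below $\eta$ one gets exponential decay and $p<p_c$. This is the paper's Lemma~\ref{t:Kesten}, and note it requires \emph{both} $V$ and $H$ to be large (resp.\ small) simultaneously at matched scales --- which again is supplied only by the balanced definition of $w_n$ together with the RSW theorem, not by your threshold definition. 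So the overall architecture (middle inequalities by monotonicity, outer inequalities as the content, contradiction with the known facts about $p_c$) is reasonable, but the two load-bearing inputs --- the oriented RSW lemma and the $H$-versus-$V$ balancing choice of $w_n$ --- are absent, and the proposal does not go through without them.
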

We wish to highlight that similar statements are also available in the context of critical non-oriented percolation, with $w_n=n$ in this case.
We will see that $w_n$ is of the same order as $\bbE_{p_c} ( R_n\,|\,0\lr  \ell_n)$ and can therefore be intuitively understood as the typical width of a connected component of height $n$. Contrarily to the non-oriented case, we will show that $w_n$ is not growing linearly but is in fact smaller than $n^{1-\varepsilon}$.

The different rectangles involved in the previous statement will be the ``elementary bricks'' for all the constructions made in this article. The quantities on the right correspond to ``crossings in the easy direction'', while those on the left corresponds to ``crossings in the hard direction'', meaning that compared to a ``square box'' of size $w_n$ times $n$, the events on the left involve rectangles which are three times longer in the direction of crossing, while the events on the right involve rectangles which are three times larger orthogonally to the direction of crossing. The proof of the box-crossing property is based on an analog in the oriented case of the Russo-Seymour-Welsh (RSW) result for two-dimensional non-oriented Bernoulli percolation (see \cite{DumTas16} for a recent survey on this subject). This RSW result is stated as Theorem~\ref{t:RSW} in Section~\ref{s:rsw}. The reader should be careful that the specificities of the oriented case make the proof of the RSW result very different from the non-oriented case, and that the denomination simply refer to the fact that crossings of rectangles in the hard direction are expressed in terms of crossing of rectangles in the easy direction.

\paragraph{Generalization to other two-dimensional models} We work with a specific choice of model but we believe that the proof extends {\em mutatis mutandis} to oriented percolation on $\mathbb Z^2$ where edges are oriented from $x$ to $x+(0,1)$, $x+(-1,0)$ and $x+(0,1)$, and to the geometric representation of the one-dimensional contact process.

\paragraph{Applications and open problems} For non-oriented percolation, non-trivial bounds on crossing probabilities is the key step towards the understanding of the critical and near-critical phases. We believe that the box-crossing property established in this paper should lead to similar applications in the oriented case. For instance, scaling relations can be studied using \cite{DurTan89,DurSchTan89b}, see \cite{preparation}.

Let us mention that studying the limit of $\sfC_0$ conditioned on $0\rightarrow\ell_n$ and computing the exact value of critical exponents is a major open question. In particular, two objects of special interest in the oriented case are the set of ``renewal points'' (i.e.~heights that intersect $\sfC_0$ only once), and the process of the ``right-most particle'' $n\mapsto R_n$, see Fig.~\ref{fig:simulation}.

     \begin{figure}
    \label{fig:simulation}
    \centering
    \begin{minipage}[c]{.15\linewidth}
      \includegraphics[height=20cm]{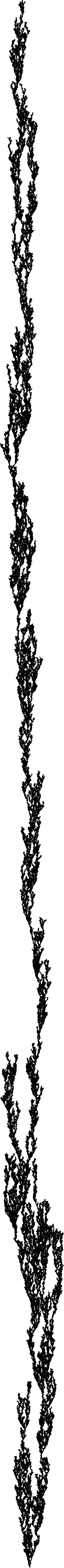}
    \end{minipage}
    \begin{minipage}[c]{.8\linewidth}
      \includegraphics[width=\textwidth]{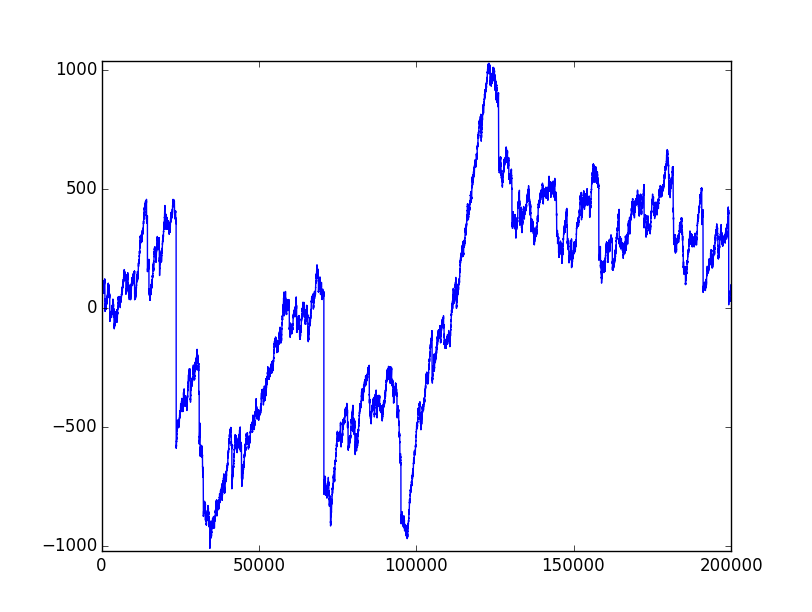}
      \caption{On the left, $\sfC_0$ conditioned on $0\rightarrow \ell_{8000}$ and $0\not\rightarrow \ell_{10000}$. Above, the process obtained by taking the right-most particle of $\sfC_0$ conditioned on $0\rightarrow\ell_{200\,000}$.}
    \end{minipage}

  \end{figure}

\subsection{Preliminaries}
\label{sec:preliminaries}

\paragraph{Further notation} We will always work with intersections of sets with $\mathbb L$. For instance, $[a,b]\times[c,d]$ will mean the intersection of $\mathbb L$ with the corresponding part of the plane. We write $A\rightarrow B$ for the event that there exist $x\in A$ and $y\in B$ with $x\rightarrow y$. Below, we will drop the subscript $p_c$ in the notation and write for instance $\bbP$, $H(m,n)$ and $V(m,n)$ for $\bbP_{p_c}$, $H_{p_c}(m,n)$ and $V_{p_c}(m,n)$. Importantly, we will keep the subscript $p$ when $p$ is not a priori equal to $p_c$.

\paragraph{One input from percolation theory: the square root trick} We will use repeatedly (see \cite{Gri99}) the classical Harris-Fortuin-Kasteleyn-Ginibre (FKG) inequality: for two increasing events\footnote{An event $E$ is increasing if it is stable by opening edges.} $E$ and $F$,
\begin{equation}
\label{eq:FKG}\tag{FKG} \bbP_p(E\cap F)\ge \bbP_p(E)\bbP_p(F).
\end{equation}
Let us also mention the following trivial application of the FKG inequality, called the {\em square-root trick}: for any increasing events $A_1,\dots,A_N$,
\begin{equation}\tag{SRT}\max\{\bbP_p(A_n):1\le n\le N\}\ge 1-\big(1-\bbP_p(A_1\cup\dots\cup A_N)\big)^{1/N}.\end{equation}

\paragraph{Organization of the paper} Section 2 is devoted to the proof of the Russo-Seymour-Welsh type result. This result is then used in Section 3 to derive the box-crossing property. Section 4 is devoted to the proofs of Theorems~\ref{thm:1} and \ref{thm:2}.

\paragraph{Acknowledgments} The work of the two first authors was supported by a grant from the Swiss NSF and the NCCR SwissMap also funded by the Swiss NSF. The project was initiated during a stay of the third author to the Universit\'e de Gen\`eve, and the authors are grateful to the institution for making such a stay possible. AT was supported by CNPq grants 306348/2012-8 and 478577/2012-5 and by FAPERJ grant 202.231/2015.

\section{Russo-Seymour-Welsh type result}\label{s:rsw}

This section is dedicated to the proof of a Russo-Seymour-Welsh theorem for oriented percolation. It enables us to express crossing probabilities of rectangles with different aspect ratios. We include also a technical (and easy) result at the end of this section. In this section it will be convenient to use crossing probabilities for rectangle which may have non integer dimensions. If $r,s$ are two real numbers, we set
\begin{equation}
  \label{eq:21}
  H_p(r,s)=H_p(\lceil r\rceil,\lceil s\rceil)\quad\text{and}\quad V_p(r,s)=V_p(\lceil r\rceil,\lceil s\rceil),
\end{equation}
where $\lceil r\rceil$ denotes the upper integer part of~$r$.
\ncg{g:RSW}
\ncg{g:width}
\begin{theorem}[RSW type result]
  \label{t:RSW}
For any $\alpha\in(\tfrac34,1)$, there exist $\varepsilon\in(0,1)$ and an increasing homeomorphism $\ucg{g:RSW}: [0, 1] \to [0,1]$ such that for any $m,n\ge1$,
  \begin{equation*}
    \min\big\{V_p(m, 3n),H_p(3m,n)\big\} \geq \ucg{g:RSW}\big( \min \big\{ V_p (m, \alpha \varepsilon n),H_p(\alpha m,\varepsilon n) \big\} \big).
  \end{equation*}
\end{theorem}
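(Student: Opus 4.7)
Writing $u := \min\{V_p(m, \alpha \varepsilon n),\, H_p(\alpha m, \varepsilon n)\}$, I would aim to produce an increasing homeomorphism $g:[0,1]\to[0,1]$ such that both $V_p(m, 3n) \ge g(u)$ and $H_p(3m, n) \ge g(u)$. Both bounds come from the same gluing strategy: stack many short crossings, and use companion crossings with the perpendicular aspect ratio as ``bridges'' to concatenate them. I will sketch the vertical bound $V_p(m, 3n) \ge g(u)$; the horizontal one is analogous, with the roles of $V$ and $H$ swapped.

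As a first step I would exploit the only available symmetry of $\mathbb L$, namely the reflection $x \mapsto -x$, together with the square-root trick, to improve the inputs. From $H_p(\alpha m, \varepsilon n) \ge u$ applied to horizontally shifted and reflected copies, one extracts horizontal crossings of an $\alpha m \times \varepsilon n$ rectangle whose left and right endpoints lie in prescribed sub-intervals of the boundary, with probability at least $\phi_1(u)$ for some explicit increasing homeomorphism $\phi_1$. Here the condition $\alpha > 3/4$ is used to ensure that two shifted width-$\alpha m$ rectangles overlap in a zone wide enough that a horizontal crossing of one and a reflected horizontal crossing of the other must intersect inside the overlap. A similar argument applied to $V_p(m, \alpha \varepsilon n)$ yields short vertical crossings whose top and bottom endpoints avoid the extreme columns. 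These endpoint-controlled short crossings are what will allow the subsequent stitching.

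Next I would subdivide $[0, m] \times [0, 3n]$ into $K := \lceil 3/(\alpha \varepsilon) \rceil$ horizontal slabs of height $\alpha \varepsilon n$ and insert, at each of the $K-1$ interfaces, an auxiliary slab of height $\varepsilon n > \alpha \varepsilon n$. In each short slab, a short vertical crossing with controlled endpoints exists with probability at least $\phi_1(u)$; in each auxiliary interface slab, a horizontal crossing of an $\alpha m \times \varepsilon n$ rectangle (combined with its reflected copy via the FKG inequality) exists with a comparable lower bound, and the endpoint control guarantees that it meets both the top of the short vertical crossing just below the interface and the bottom of the one just above. Applying FKG to all $2K-1$ crossings then combines them into a single open path spanning the full height $3n$, yielding $V_p(m, 3n) \ge \phi_1(u)^{2K-1} =: g_1(u)$. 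The mirror construction for $H_p(3m, n)$ stacks $K$ horizontal crossings at different heights and shifted widths and uses short vertical crossings as bridges, giving $H_p(3m, n) \ge g_2(u)$; setting $g := \min(g_1, g_2)$ completes the plan. The principal obstacle, and the reason the argument differs substantively from the classical non-oriented RSW theorem, is the absence of top-bottom symmetry in oriented percolation: one cannot reflect an open path across a horizontal line to obtain another open path, so the classical ``reflect and glue at an apex'' trick is unavailable. All gluing must therefore happen between genuinely distinct crossings at compatible heights, which forces the careful endpoint control of the first step and explains the tighter geometric condition $\alpha > 3/4$ in place of the $\alpha > 1/2$ sufficient in the non-oriented setting.
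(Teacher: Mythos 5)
Your plan founders on the gluing step, and the mechanism you invoke to make it work does not exist. First, the ``endpoint control'': the square-root trick applied to shifted or reflected copies only tells you that \emph{some} sub-interval of the boundary is reached with probability at least $1-(1-u)^{1/N}$; it does not let you prescribe \emph{which} one, and since the only symmetry of the oriented lattice is the reflection $x\mapsto -x$, there is no vertical symmetry that could pin down the heights of the endpoints of a horizontal crossing (or the horizontal positions of a vertical one). Your stated role for $\alpha>3/4$ is also incorrect: two horizontal crossings of overlapping rectangles need not meet however wide the overlap is, since they can run at different heights; in the paper $\alpha>3/4$ enters only to guarantee $2\alpha-\tfrac12>1$, i.e.\ that the box $B'$ produced in Lemma~\ref{lem:hor} is at least as wide as the target box. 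Second, even granting the endpoint control you assume, the stitching fails: the vertical crossing of a slab ends on the interface line at an uncontrolled horizontal position, while the bridge is a horizontal crossing of a width-$\alpha m$ rectangle straddling that line; nothing forces the two paths to share a vertex, because the vertical path does not traverse the bridge rectangle from bottom to top and the bridge may stay above the interface except near its endpoints. The FKG inequality puts both crossings in the same configuration but does not connect them.

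The paper's proof supplies exactly the mechanisms your sketch is missing. In Lemma~\ref{lem:hor} one conditions on the \emph{top-most} left-right crossing $\Gamma$ of $B_r$; because the reference box $B$ is symmetric under $x\mapsto -x$, any vertical crossing of $B$ must reach $\Gamma$ or $\sigma(\Gamma)$ in the unexplored region, and the square-root trick converts this into a connection to the specific crossing $\Gamma$ at the cost of a square root --- this is how a vertical crossing is forced to meet a horizontal one without any endpoint control, and a short planar case analysis (the $E$-path either meets the crossing of $B_\ell$ or itself crosses $B'$) completes the widening step. The unknown location of crossings is then handled not by prescribing endpoints but by dichotomies: the event $E(k,\ell)$ lets the path escape either through the side or through the top, a further square-root trick selects the better alternative, and the proof of Theorem~\ref{t:RSW} splits into Cases 1 and 2 according to whether $H_p(\alpha m,2\ell)$ is already large; long vertical crossings are built by stacking $E(k,\ell)$ together with its reflection (Lemma~\ref{lem:vert}), and long horizontal crossings by iterating Lemma~\ref{lem:hor}, each iteration trading a factor $1/\alpha$ in width against a halving of the height, which is precisely where $\varepsilon=2^{-s}$ comes from. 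None of these ingredients --- extremal-crossing conditioning, the side-or-top dichotomy, the case analysis, the width/height trade-off --- appears in your proposal, and without them the concatenation you describe cannot be carried out.
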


On the left, if a rectangle of size $m$ times $n$ is our reference, the crossing probabilities involve rectangles which are three times longer in the direction of crossing. On the right, if a rectangle of size $m$ times $\varepsilon n$ is our reference, the crossing probabilities involve rectangles which are slightly shorter in the direction of crossing. This is reminiscent of the classical RSW theory for non-oriented percolation: crossing probabilities in the hard direction can be bounded from below by expressions involving crossing probabilities in the easy direction.

The heights of the rectangles are very different on the left and the right (there is a factor roughly $\varepsilon$ between the two), which is a major difference between the oriented and the non-oriented cases. Said differently, in order to obtain estimations on probabilities of crossings of rectangles in the hard direction, one needs to pay a cost on the height of the rectangle. The following example illustrates perfectly why changing the height is necessary in the oriented case: think of the extremal case of the horizontal crossing from left to right of a rectangle of size $n$ times $n$. In this case, it is simply impossible to cross horizontally a rectangle of size $2n$ times $n$ due to the direction of the edges.

We start the proof of the theorem by a key lemma allowing us to increase the width of rectangles which are crossed horizontally.

\begin{lemma}\label{lem:hor}
For any $\alpha\in(\tfrac34,1)$, there exists an increasing homeomorphism $\ucg{g:width}: [0, 1] \to [0,1]$ such that for any $k,\ell\ge 1$,
  \begin{equation*}
    H_p(k,\ell) \geq \ucg{g:width} \big( \min \big\{ V_p (k,\ell),H_p(\alpha k,\ell/2) \big\} \big).
  \end{equation*}
\end{lemma}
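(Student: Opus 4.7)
Set $q:=\min\{V_p(k,\ell),H_p(\alpha k,\ell/2)\}$. The goal is to produce, with probability bounded below by $\ucg{g:width}(q)$ for some increasing homeomorphism, a left-to-right crossing of $[0,k]\times[0,\ell]$ by gluing together three auxiliary crossings via the planarity of $\mathbb L$ and FKG. The three ingredients are: (i) a left-to-right crossing $\gamma_L$ of the lower-left rectangle $A_L:=[0,\alpha k]\times[0,\ell/2]$; (ii) a left-to-right crossing $\gamma_R$ of the upper-right rectangle $A_R:=[(1-\alpha)k,k]\times[\ell/2,\ell]$, which is the translate of $A_L$ by $((1-\alpha)k,\ell/2)$; and (iii) a vertical crossing $\gamma_V$ of $[0,k]\times[0,\ell]$ acting as a bridge between them. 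By translation invariance, each of the three existence events has probability at least $q$.

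\textbf{Gluing (topological step).} Write $(u,0),(v,\ell)$ for the endpoints of $\gamma_V$ and $(w,\ell/2)$ for its unique intersection with the midline (each horizontal line is crossed once by an oriented path). The key geometric claim is: if $u\leq\alpha k$, $v\geq(1-\alpha)k$ and $w\in[(1-\alpha)k,\alpha k]$, then the bottom half of $\gamma_V$ enters $A_L$ through its bottom side and leaves through its top side, while the top half of $\gamma_V$ enters $A_R$ from its bottom and leaves through its top. Since $\mathbb L$ is planar (its edges do not cross one another geometrically), a component of $\gamma_V$ inside $A_L$ connecting the bottom and top sides must share a vertex $(a_1,b_1)$ with $\gamma_L$, and symmetrically $\gamma_V$ inside $A_R$ must share a vertex $(a_2,b_2)$ with $\gamma_R$. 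Because $b_1\leq\ell/2\leq b_2$, the piece of $\gamma_V$ between $(a_1,b_1)$ and $(a_2,b_2)$ is an oriented sub-path, and concatenating $\gamma_L|_{\text{start}\to(a_1,b_1)}$, $\gamma_V|_{(a_1,b_1)\to(a_2,b_2)}$ and $\gamma_R|_{(a_2,b_2)\to\text{end}}$ produces the desired left-to-right oriented crossing of $[0,k]\times[0,\ell]$.

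\textbf{Probability estimate.} The three existence events (of $\gamma_L$, $\gamma_R$ and of a \emph{good} $\gamma_V$) are increasing, so FKG reduces the task to bounding below the probability of a good $\gamma_V$. The endpoint conditions $\{u\leq\alpha k\}$ and $\{v\geq(1-\alpha)k\}$ are themselves increasing events — they assert the existence of some vertical crossing starting (resp.\ ending) in a prescribed sub-interval of the bottom (resp.\ top). The left-right symmetry $x\mapsto k-x$ pairs each of them with its complement to form a cover of the vertical-crossing event, so SRT gives each of them probability at least $1-\sqrt{1-V_p(k,\ell)}$, and FKG combines them to at least $(1-\sqrt{1-V_p(k,\ell)})^2$.

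\textbf{Main obstacle.} The delicate ingredient is the midpoint condition $w\in[(1-\alpha)k,\alpha k]$. Unlike the endpoints, $w$ is not the endpoint of any increasing event, and SRT does not apply directly; and the planarity argument above fails if $\gamma_V$ is allowed to ``sneak around'' $\gamma_L$ by exiting and re-entering $A_L$. I plan to handle both issues simultaneously by working with a canonical crossing (e.g.\ the leftmost vertical crossing of $[0,k]\times[0,\ell]$, or the lowest horizontal crossing of $A_L$), so that the region strictly on one side of the explored path is independent of it. Combined with left-right symmetry and the assumption $\alpha>3/4$, which guarantees that the middle strip $[(1-\alpha)k,\alpha k]$ has width greater than $k/2$, this should yield the required lower bound on the midpoint event. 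Composing all the estimates gives $H_p(k,\ell)\geq\ucg{g:width}(q)$ for some $\ucg{g:width}$ polynomial in $q$, which is readily extended into an increasing homeomorphism of $[0,1]$.
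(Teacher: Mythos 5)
Your overall strategy---combining a vertical crossing of $[0,k]\times[0,\ell]$ with left-right crossings of boxes of size $\alpha k\times \ell/2$ via symmetry, the square-root trick, FKG and planarity---is in the same spirit as the paper's, but there is a genuine gap exactly at the point you yourself flag as the ``main obstacle'', and the sketched fix does not close it. Two concrete problems. First, even for the endpoint conditions, SRT and FKG only give you, with probability at least $\big(1-\sqrt{1-V_p(k,\ell)}\big)^2$, the existence of \emph{some} vertical crossing starting in $[0,\alpha k]\times\{0\}$ and of some (possibly different) vertical crossing ending in $[(1-\alpha)k,k]\times\{\ell\}$: two disjoint crossings, one hugging the left side and one hugging the right side, realize both events while no single path satisfies both constraints, and oriented paths that do not meet cannot be merged. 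The same objection applies with more force to the midline condition $w\in[(1-\alpha)k,\alpha k]$: reflection and SRT give crossings with $w\le\alpha k$ and crossings with $w\ge(1-\alpha)k$ separately, but these may be distinct disjoint paths neither of which meets the middle strip, and the fact that the strip has width $(2\alpha-1)k>k/2$ does not by itself produce a single path through it. Passing to the leftmost vertical crossing does not obviously repair this either: its midline location is not monotone in the configuration, and its exploration conditions the wrong region relative to where you need fresh randomness for $\gamma_L$ and $\gamma_R$. So the ``probability estimate'' step, as written, does not deliver a single good $\gamma_V$, and the gluing step has nothing to glue.

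The paper avoids controlling the location of the vertical crossing altogether, and this is the missing idea. It centers the tall box, $B=[-k/2,k/2]\times[0,\ell]$, explores the \emph{top-most} left-right crossing $\Gamma=\gamma$ of $B_r=[0,\alpha k]\times[\ell/2,\ell]$, and reflects it through the vertical symmetry axis of $B$: since $\alpha>\tfrac34>\tfrac12$, the union $\gamma\cup\sigma(\gamma)$ spans the full width of $B$, so \emph{every} vertical crossing of $B$ must reach $\gamma\cup\sigma(\gamma)$ from below---this is what eliminates the ``sneaking around'' problem for free, with no constraint on where the vertical crossing sits. The square-root trick applied to ``reach $\gamma$'' versus ``reach $\sigma(\gamma)$'' (equal probabilities by symmetry) gives a connection from the bottom of $B$ to $\gamma$ in the unexplored region with probability at least $1-\sqrt{1-V_p(k,\ell)}$, and the top-most-crossing exploration provides the independence needed to sum over $\gamma$. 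A second left-right crossing, of $B_\ell=[k/2-\alpha k,k/2]\times[0,\ell/2]$, combined by FKG and a two-case planarity argument, then yields a left-right crossing of $B'=[k/2-\alpha k,\alpha k]\times[0,\ell]$, whose width is $(2\alpha-\tfrac12)k\ge k$; this is where $\alpha>\tfrac34$ is really used (not through the width of your middle strip), giving $H_p(k,\ell)\ge H_p(\alpha k,\ell/2)^2\big(1-\sqrt{1-V_p(k,\ell)}\big)$. If you want to salvage your plan, replacing the midline/endpoint control of $\gamma_V$ by this reflection-of-the-explored-crossing device is the way to do it.
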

Note that in the statement above, we allow the variables $k$ and $\ell$ to take non-integer values.

\begin{proof}
  Let us first assume that $k/2$, $\ell/2$ and $\alpha k$ are integers (this is purely for convenience as can be seen at the end of the proof). Introduce the boxes
\begin{equation*}B=[-k/2,k/2]\times[0,\ell]\quad\text{and}\quad
B_r=[0,\alpha k]\times[\ell/2,\ell].
\end{equation*}
illustrated on Fig.~\ref{fig:RSW1}. Let $E$ be the event that there exists an open path in $B\cup B_r$ starting from the bottom of $B$ and ending on the right side of $B_r$. Let us prove that
\begin{equation}\label{eq:aa}\mathbb P_p[E]\ge H_p(\alpha k,\ell/2)\Big(1-\sqrt{1-V_p(k,\ell)}\Big).\end{equation}
In order to get this inequality, we use a ``conditioning on the top-most left-right crossing of $B_r$'' illustrated on Fig.~\ref{fig:RSW1}. This type of reasoning is now classical in percolation.
\begin{figure}[htpp]
  \label{fig:RSW1}
  \centering
  \includegraphics[width=0.34\textwidth]{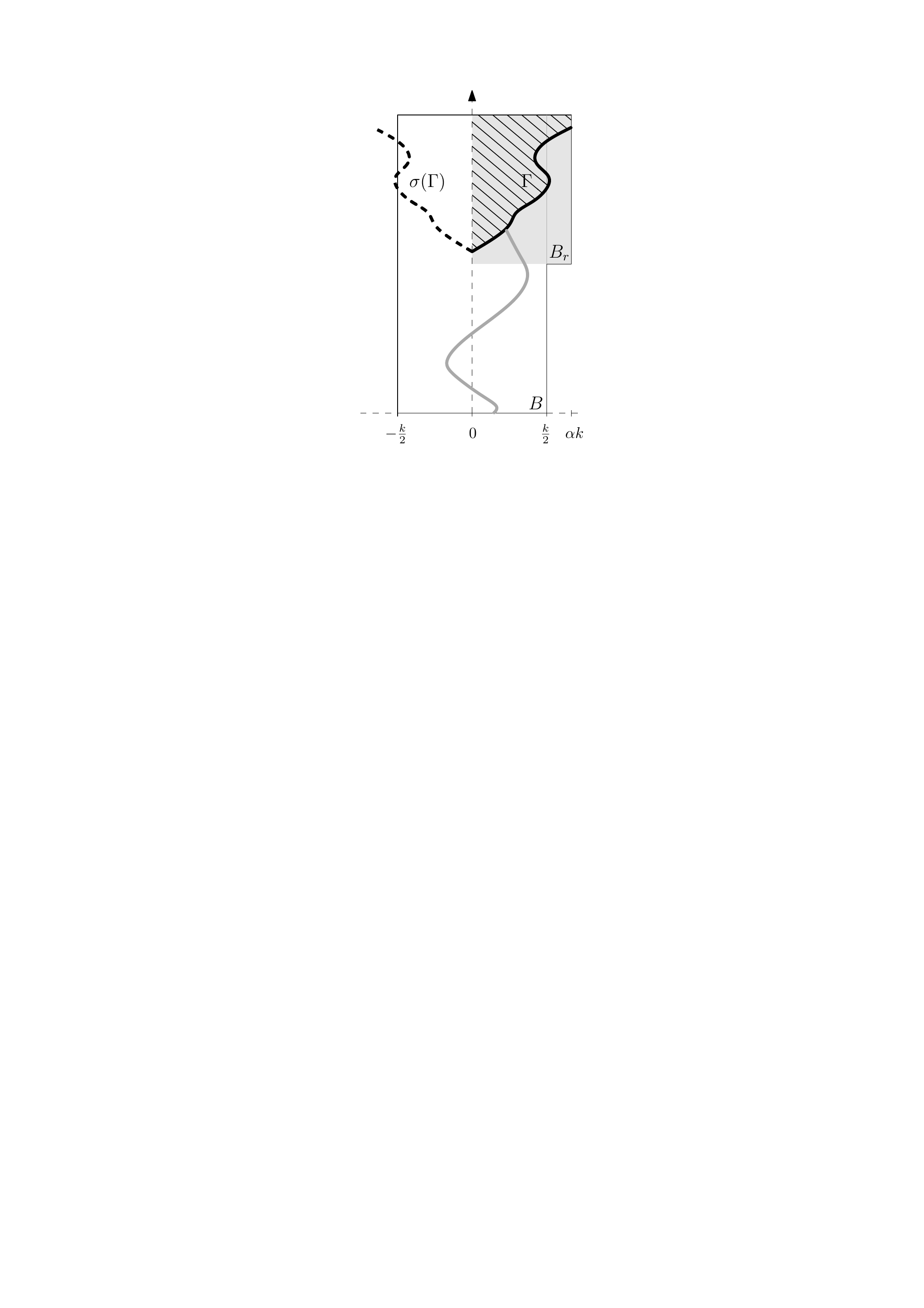}
  \caption{Construction of the event $E$. First, we require that the box $B_r$ be crossed from left to right, and we explore the top-most left-right crossing $\Gamma$ in $B_r$. After this exploration, the edges in the hatched region have been discovered. Then we ask that in the unexplored region there exists an open path (in grey) connecting the bottom side of $B$ to $\Gamma$.}
\end{figure}

For a configuration $\omega$ containing a left-right crossing of $B_r$, define $\Gamma$ to be the top-most left-right crossing\footnote{Formally, this can be seen as the largest left-right crossing of $B_r$ for the natural lexicographical order on path induced by the lexicographical order on vertices and the order that the edge going left from a vertex is smaller than the edge going right.} of $B_r$. When there is no such crossing, set $\Gamma=\emptyset$. Since the box $B_r$ is crossed from left to right with probability $H(\alpha k, \ell/2)$, we have
 \begin{equation}
   \label{eq:20}
   H(\alpha k,\ell/2)=\sum_{\gamma\neq \emptyset}\mathbb P_p ( \Gamma=\gamma),
 \end{equation}
 where the sum is over all the possible left to right paths in $B_r$. Fix for a moment such a path $\gamma$. Introduce the orthogonal symmetry $\sigma$ with respect to the axis $y=0$. Define $S_\gamma$ to be the set of vertices of $B$ which are reachable from a vertex of the bottom of $B$ by an oriented path of edges not crossing $\gamma\cup\sigma(\gamma)$. Let $E_\gamma$ be the event that there exists a path in $S_\gamma$ connecting the bottom side of $B$ to $\gamma$ inside $S_\gamma$. Using symmetry and the square root trick, together with the fact that any path crossing $B$ vertically must contain a path reaching $\gamma$ or $\sigma(\gamma)$ in $S_\gamma$, we find
\begin{equation}
  \label{eq:29}
  \mathbb P_p(E_\gamma)\ge 1-\sqrt{1-V_p(k,\ell)}.
\end{equation}
Now, if $\Gamma=\gamma$ and $E_\gamma$ occurs then the event $E$ occurs. Therefore, summing over all the possible paths $\gamma$, we obtain
\begin{align}
    \mathbb P_p(E)&\ge \sum_{\gamma\neq\emptyset}\mathbb P_p(\{\Gamma=\gamma\}\cap E_\gamma)\notag\\
    &= \sum_{\gamma\neq\emptyset}\mathbb P_p(\Gamma=\gamma) \mathbb P_p( E_\gamma).\label{eq:32}
\end{align}
In the second line, we used that the event $\Gamma=\gamma$ is measurable with respect to the edges with both ends in $B\setminus S_\gamma$ while $E_\gamma$ is measurable with respect to the edges in $S_\gamma$, therefore these two events are independent. We finally obtain Eq.\eqref{eq:aa} by combining the equation above together with \eqref{eq:20} and \eqref{eq:29}.

We now conclude the proof. Consider the boxes
\begin{align*}
  B_\ell&=[k/2-\alpha k,k/2]\times[0,\ell/2],\\
B'&=[k/2-\alpha k,\alpha k]\times[0,\ell].
\end{align*}

\begin{figure}
  \label{fig:RSW2}
  \begin{minipage}[t]{.48\linewidth}
    \centering
    \includegraphics[width=0.7\textwidth]{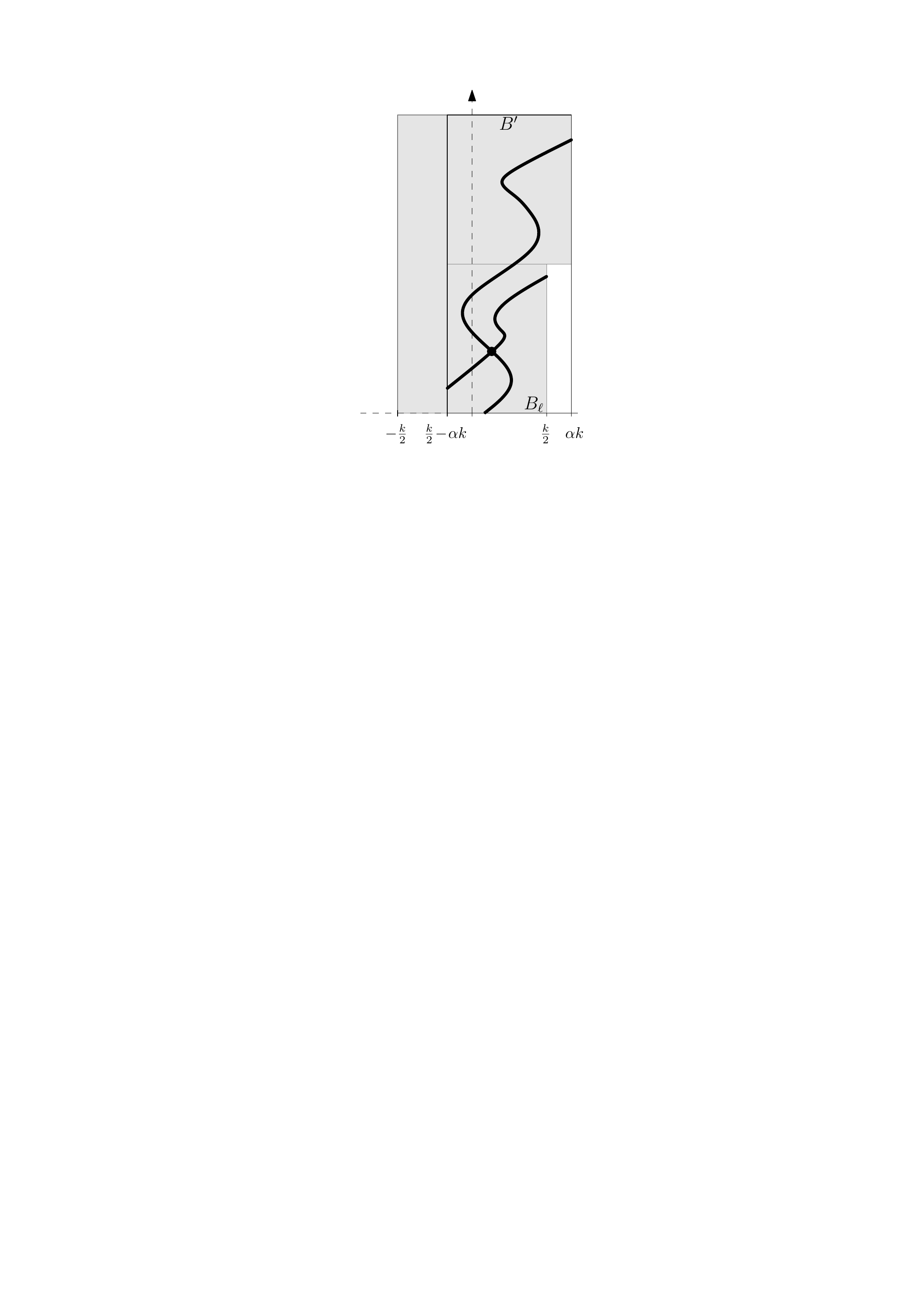}
  \end{minipage}
  \hfill
  \begin{minipage}[t]{.48\linewidth}
    \centering
    \includegraphics[width=0.7\textwidth]{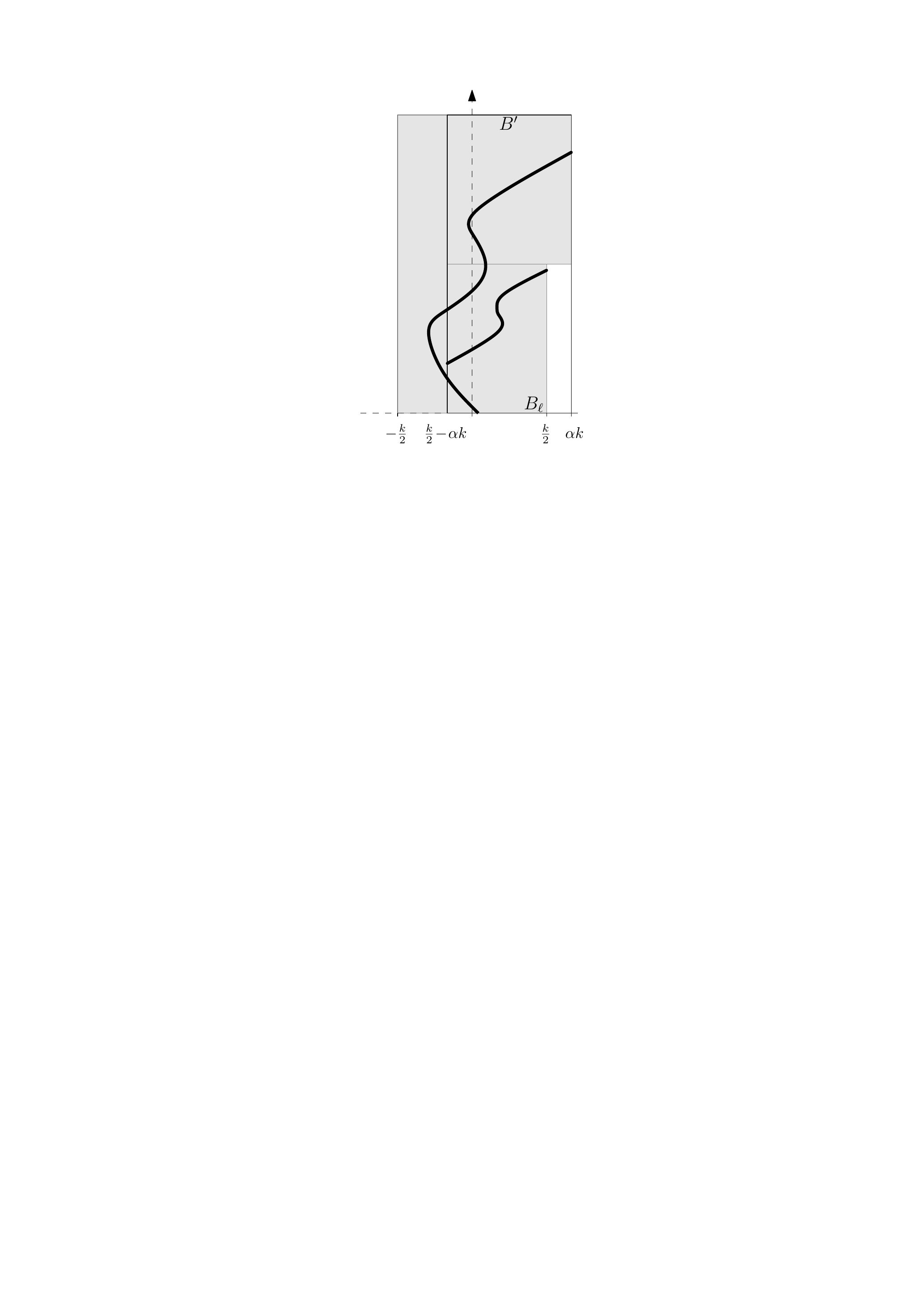}
  \end{minipage}
\caption{Two possible cases when  the event $E\cap C_\ell$ occurs: on the left  picture, the left-right crossing of $B_\ell$ intersect the path realizing $E$, and on the the right picture the two paths do not intersect. In both cases, the box $B'$ is crossed from left to right.}
\end{figure}
Let $C_\ell$ be the event that $B_\ell$ is crossed from left to right. On the event $E\cap C_\ell$, there must exist a path from left to right in the box $B'$. Indeed, we are in one of the two following cases (illustrated on Fig.~\ref{fig:RSW2}):
\begin{itemize}
\item A crossing from left to right in $B_\ell$ intersects a crossing from the bottom of $B$ to the right of $B_r$, thus creating a left-right crossing in $B'$.
\item No crossing from left to right in $B_\ell$ intersects a crossing from the bottom of $B$ to the right of $B_r$, in such case any of the latter paths contains a left-right crossing $B'$.
\end{itemize}
Since $2\alpha-\tfrac12>1$, we deduce that
\begin{align*}H_p(k,\ell)&\stackrel{\phantom{\rm (FKG)}}\geq \mathbb P_p(B'\text{ is crossed from  left to right})\\
  &\stackrel{\phantom{\rm (FKG)}}\ge \mathbb P_p(E\cap C_\ell)\\
  &\stackrel{\rm (FKG)}{\ge} \mathbb P_p(C_\ell)\mathbb P_p(E)\\
  &\stackrel{\ \eqref{eq:aa}\ }\ge H_p(\alpha k,\ell/2)^2\Big(1-\sqrt{1-V_p(k,\ell)}\Big).\end{align*} This finishes the proof of the case where $k$ and $\ell$ are two even integers. For general real values $k$ large enough and $\ell\ge1$, one may do the same proof with $B=[-\lceil k/2\rceil , \lceil k/2\rceil]\times[0,\lceil \ell \rceil]$ and $B_r=[0,\lceil k/2\rceil ]\times [\lceil \ell\rceil - \lceil \ell/2\rceil,\lceil \ell \rceil]$ provided that $2\lceil \alpha k\rceil-\lceil k/2\rceil \ge k$. Finally, note that by choosing $\ucg{g:width}$ properly, we may cover the case of small values of $k$.
\end{proof}

The next trivial lemma will be useful in the proof. For $k,\ell\ge1$ integers, let $E(k,\ell)$ be the event that $\{0\}\times[0,\ell]$ is connected to $\{k\}\times[2\ell,3\ell]$ or $[0,k]\times\{3\ell\}$ (see Fig.~\ref{E(k,ell)}).
  \begin{lemma}\label{lem:vert}
    For any integer $C>0$ and any integers $k,\ell\ge1$,
  $$V_p(k,C\ell)\ge \mathbb P_p\big(E(k,\ell)\big)^{2C}.$$
  \end{lemma}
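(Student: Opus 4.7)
The plan is a direct application of the Harris--FKG inequality to $2C$ translates of $E(k,\ell)$. I define events $F_1,\ldots,F_{2C}$, where each $F_j$ is a vertical translate of $E(k,\ell)$, possibly combined with a left--right reflection; by translation invariance of the product measure and the $x\mapsto -x$ symmetry of the lattice $\mathbb L$, each such $F_j$ satisfies $\bbP_p(F_j)=\bbP_p(E(k,\ell))$. The shifts and reflections are chosen so that the joint occurrence $\bigcap_{j=1}^{2C} F_j$ forces an open path from the bottom $[0,k]\times\{0\}$ to the top $[0,k]\times\{C\ell\}$ inside $[0,k]\times[0,C\ell]$. Since all $F_j$ are increasing, \eqref{eq:FKG} will then give
\[
V_p(k,C\ell) \geq \bbP_p\Big(\bigcap_{j=1}^{2C} F_j\Big) \geq \prod_{j=1}^{2C}\bbP_p(F_j) = \bbP_p(E(k,\ell))^{2C}.
\]

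The natural construction is to alternate orientations: odd-indexed $F_j$ mimic $E(k,\ell)$ directly (a path starting on the left and climbing to the right or to the top), while even-indexed $F_j$ are the left--right reflections (starting on the right and climbing to the left or to the top). The vertical shifts are chosen so that the ending height range of $F_j$ on one vertical side matches the starting height range of $F_{j+1}$ on that same side. Two oriented lattice paths climbing into a common height band from opposite sides must share a vertex (by a discrete intermediate-value argument applied to their $x$-coordinates), and so consecutive $F_j$-paths can be spliced at that vertex. Iterating for $j=1,\ldots,2C-1$ produces the required bottom-to-top crossing; anchoring the chain to the bottom line exploits that the starting region of $F_1$ is the left side at low heights, which includes the bottom-left corner $(0,0)\in[0,k]\times\{0\}$.

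The one step requiring genuine verification is the geometric splicing, namely that consecutive $F_j$-paths indeed share a common vertex at which they can be concatenated without leaving $[0,k]\times[0,C\ell]$. The factor $2C$ in the exponent --- rather than the tight $\approx C/2$ one could hope for --- provides the slack that makes this routine: each $E(k,\ell)$ event already guarantees at least $\ell$ of vertical progress, so stacking $2C$ copies with shifts of order $\ell/2$ leaves ample overlap both for the intermediate-value splicing argument and for covering the full vertical extent $C\ell$ without any delicate lattice accounting.
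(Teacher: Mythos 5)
Your overall strategy (Harris--FKG applied to $2C$ increasing translated/reflected copies of $E(k,\ell)$, giving the exponent $2C$) is the same as the paper's, but the geometric arrangement you propose does not work, and the step you yourself defer as ``routine'' is exactly where it fails. In your head-to-tail chain the ending band of $F_j$ and the starting band of $F_{j+1}$ lie on the \emph{same} vertical side, and this does not force the two paths to share a vertex: since oriented paths move strictly upward, the $F_{j+1}$-path may start on that side strictly above the highest vertex of the $F_j$-path (say the $F_1$-path ends on the right side at height near $2\ell$, the bottom of the shared band, while the reflected $F_2$-path starts there at height near $3\ell$, its top), in which case the two paths are disjoint and no splicing is possible. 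The ``or the top'' alternative in the definition of $E(k,\ell)$ makes this worse, since the $F_j$-path may reach the top of its window without touching the side at all. Hence the deterministic implication that $\bigcap_j F_j$ forces a vertical crossing is false for your construction, and overlap of the height windows cannot repair it: two upward paths with overlapping height ranges need not meet (one can hug the left side of the strip, the other the right), so the slack you invoke does not substitute for a genuine crossing argument.

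The paper's construction differs at precisely this point. It pairs each translate of $E(k,\ell)$ launched from the left band $\{0\}\times[i\ell,(i+1)\ell]$ with its mirror image launched from the \emph{same height band on the opposite side} $\{k\}\times[i\ell,(i+1)\ell]$; because each of the two paths terminates on the other's starting side above that band (or on the top of the window), they are forced to cross, and concatenating at the shared vertex yields the left-anchored ``bridge'' event $F_i$ that $\{0\}\times[i\ell,(i+1)\ell]$ is connected to $\{0\}\times[(i+2)\ell,(i+3)\ell]$ inside the strip, with $\bbP_p(F_i)\ge \bbP_p(E(k,\ell))^2$ by \eqref{eq:FKG}. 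Since all bridges are anchored on the single boundary line $\{0\}\times\bbZ$ and their endpoint bands interlace, consecutive bridges necessarily intersect, so $\bigcap_{0\le i<C}F_i$ produces a vertical crossing of $[0,k]\times[\ell,(C+1)\ell]$, and one more application of \eqref{eq:FKG} over these $C$ events gives the stated bound. To salvage your write-up you must replace the same-side, head-to-tail matching by this same-band, opposite-side pairing (or an equivalent device that genuinely forces consecutive paths to intersect).
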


  \begin{proof} For an integer $i\ge0$, let $F_i$ be the event that $\{0\}\times[i \ell,(i + 1) \ell]$ is connected to $\{0\} \times [(i + 2) \ell, (i + 3) \ell]$ inside the strip $[0,k]\times\mathbb Z$. First, by translation invariance, the probability of $F_i$ is equal to the probability of $F_0$. Then, observe that the event $F_0$ occurs as soon $E(k,\ell)$ occurs together with a symmetric version of it (see Fig.~\ref{F0}). Therefore, by the FKG inequality, we have for every $i\ge 0$
    \begin{equation*}
      \mathbb P_p(F_i)=\mathbb P_p(F_0)\ge \mathbb P_p(E(k,\ell))^2.
    \end{equation*}
Finally, if all the events  $F_i$ occur for $0\le i< C$, the box $[0,k]\times[\ell,\ell+C\ell]$ is crossed vertically. The lemma thus follows from the FKG inequality.
\end{proof}

\begin{figure}
  \begin{minipage}[t]{.47\linewidth}
    \label{E(k,ell)}
    \centering
    \includegraphics[width=0.6\textwidth]{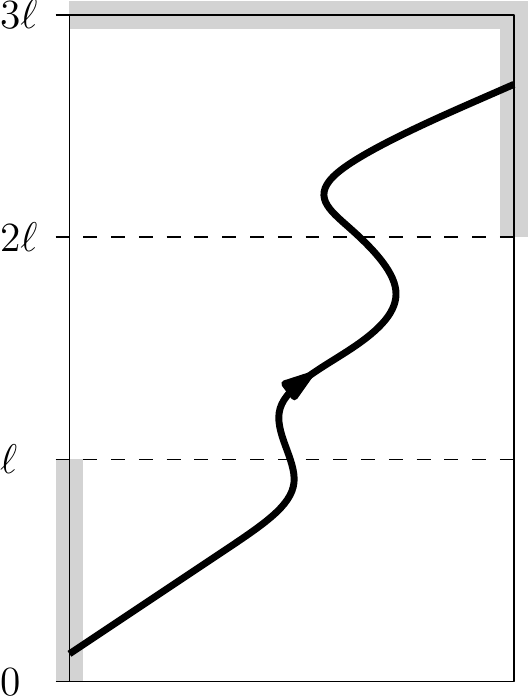}
    \caption{Diagrammatic representation of the event $E(k,\ell)$.} \end{minipage} \hfill
  \begin{minipage}[t]{.47\linewidth}
    \label{F0}
    \centering
    \includegraphics[width=0.6\textwidth]{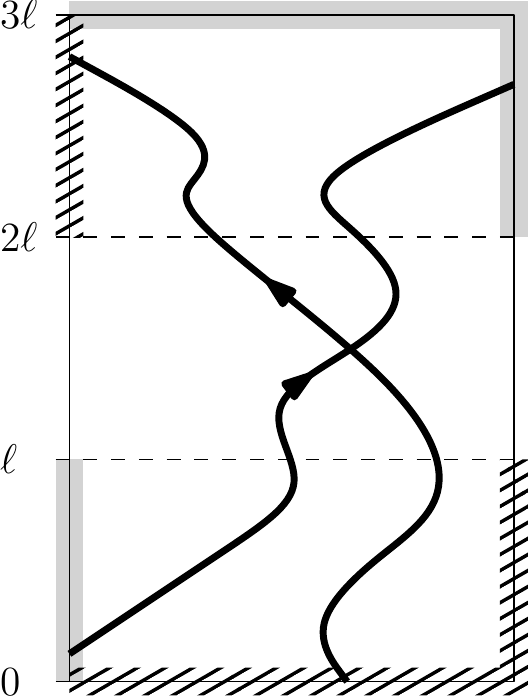}
    \caption{The event $F_0$ obtained by intersecting $E(k,\ell)$ and a symmetric version of it.}
  \end{minipage}
  \end{figure}

  \begin{proof}[Proof of Theorem~\ref{t:RSW}] Without loss of generality, we may assume that $\alpha m$ is an integer. We start by proving the bound on $H_p(3m,n)$ assuming the bound on $V_p(m,3n)$. For $k\ge m$ and $\ell\le 3n$, Lemma~\ref{lem:hor} implies
$$H_p(k,\ell)\ge \ucg{g:width}\big(\min\big\{V_p(m,3n),H_p(\alpha k,\ell/2)\big\}\big).$$
By iterating the statement above $s$ times, we get for every $s\ge 1$
\begin{equation*}
  H_p(\alpha^{1-s}m,n)\ge \ucg{g:width}^{(s)}\big(\min\big\{V_p(m,3n),H_p(\alpha m,n/2^s)\big\}\big).
\end{equation*}
Fix $s = s(\alpha)$ such $\alpha^{1-s}\ge 3$ and set $\varepsilon = \varepsilon(\alpha)=2^{-s}$. Then the equation above implies the desired inequality. Note that this is the only place where the constant $\varepsilon$ is used: it guarantees that the height of the rectangles obtained via the iteration of Lemma~\ref{lem:hor} is always smaller than $n$ (and hence a fortiori $3n$).
\ncg{2}
%\ncg{3}
\ncg{4} \ncg{5} \ncg{6} \bigbreak Let us now focus on the lower bound on $V_p(m,3n)$. Let $\ell= \alpha \varepsilon n/12$ and let $\ucg{2}$ be an homeomorphism defined through:
\begin{equation}
  g_*(x) = 1 - (1 - x)^{1/12}, g_{\#}(x) = 1 - (1 - x)^{1/2} \text{ and } \ucg{2}(x) = g_{\#} \circ g_* (x).
\end{equation}

We may assume without loss of generality that $\ell$ is an integer. We divide the proof in two cases.

\noindent {\bf Case 1.} $H_p(\alpha m,2\ell)< \ucg{2}(H_p(\alpha m,\varepsilon n))$.
\bigbreak
For $i=0,\dots,11$, let $A_i$ be the event that there exists an open path from $\{0\}\times[i\ell,(i+1)\ell]$ to $\{\alpha m\}\times[0,12\ell]$ in the strip $[0,\alpha m]\times\mathbb Z$. Since for every $i$, $\mathbb P_p(A_0)\ge\mathbb P_p(A_i)$, the square-root trick implies that there exists some $i$ with
$$\mathbb P_p(A_0)\ge 1-(1-H_p(\alpha m,\varepsilon n))^{1/12} = g_*\big(H_p(\alpha m, \varepsilon n)\big).$$
Now, if $A_0$ occurs, then either $[0,\alpha m]\times[0,2 \ell]$ is crossed horizontally, or the event $E(\alpha m,\ell)$ occurs. As a consequence, the square-root trick used one more time implies that
$$\max\{H_p(\alpha m,2\ell),\mathbb P_p(E(\alpha m,\ell))\}\ge \ucg{2}(H_p(\alpha m,\varepsilon n)).$$
(This is the definition of $\ucg{2}$ used above.)
The assumption on $H_p(\alpha m,\varepsilon n)$ implies that
$$\mathbb P_p(E(\alpha m,\ell))\ge \ucg{2}(H_p(\alpha m,\varepsilon n)),$$ so that Lemma~\ref{lem:vert} applied to $k=\alpha m$, $\ell$ and $C>16/\alpha\varepsilon$ gives
$$V_p(m,3n)\ge \ucg{4}(H_p(\alpha m,\varepsilon n)).$$

\noindent{\bf Case 2.} $H_p(\alpha m,2\ell)\ge \ucg{2}(H_p(\alpha m,\varepsilon n))$.
\bigbreak
In such case, Lemma~\ref{lem:hor} implies that
\begin{align}
  \label{eq:bc}H_p(m,4\ell) & \ge\ucg{g:width}( \min \big\{ V_p (m, 4 \ell), \ucg{2}(H_p(\alpha m, 2 \ell))\big\})\\
  \nonumber
  &\ge\ucg{g:width}( \min \big\{ V_p (m,\alpha\varepsilon n), \ucg{2}(H_p(\alpha m,\varepsilon n))\big\}).
\end{align}
Since $E(m,4\ell)$ occurs as soon as there exists a left-right crossing of $[0,m]\times[0,4\ell]$ and a vertical crossing of $[0,m]\times[0,12\ell]$, the FKG inequality implies immediately that
\begin{equation}\label{eq:bd}\mathbb P_p(E(m,4\ell))\ge H_p(m,4\ell)V_p(m,12\ell).\end{equation}
Since $12\ell\le \alpha \varepsilon n$, \eqref{eq:bc} and \eqref{eq:bd} can be combined to obtain
$$\mathbb P_p(E(m,4\ell))\ge \ucg{5}(\min\{V_p (m,\alpha\varepsilon n),H_p(\alpha m,\varepsilon n)\big\}).$$
Lemma~\ref{lem:vert} applied with $k=m$, $\ell$ and $C>8/\alpha\varepsilon$ gives
\begin{equation}
V_p(m,3n)\ge \ucg{6}(\min\{V_p (m,\alpha\varepsilon n),H_p(\alpha m,\varepsilon n)\big\}),
\end{equation}
thus concluding the proof in this case as well.
\end{proof}

Let us mention the following technical statement, which will be useful in the next sections.
\ncg{easy}
\begin{lemma}\label{lem:from 2 to alpha}
For any $\Delta>\delta>1$, there exists $C>0$ such that for any $n,m\ge1$,
  \begin{equation*}
   \max\{V_p(\Delta m, n),H_p(m,\Delta n)\}\le \ucg{easy}\big(\max\big\{V_p (\delta m, n),H_p(m,\delta n) \big\}\big),%\delta \ \ \Longrightarrow\ \ \max\{V_p (3 m, n) , H_p (m, 3n)\} < \epsilon.
  \end{equation*}
  where $\ucg{easy}(x)=1-(1-x)^C$ for any $x\in[0,1]$.
\end{lemma}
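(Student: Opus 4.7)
}

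I focus on the bound for $V_p(\Delta m, n)$; the bound for $H_p(m, \Delta n)$ follows by the symmetric argument, exchanging horizontal and vertical crossings and the roles of $m$ and $n$. The strategy is to exhibit a finite family of $C = C(\Delta, \delta)$ increasing events, each of probability at most $y := \max\{V_p(\delta m, n), H_p(m, \delta n)\}$, whose union contains the event that $[0, \Delta m] \times [0, n]$ is crossed vertically, and then to apply the square-root trick~(SRT) from the preliminaries to obtain $V_p(\Delta m, n) \leq 1 - (1-y)^C$.

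\textbf{Covering and geometric trichotomy.} Fix $\delta' := (1+\delta)/2 \in (1, \delta)$, and cover $[0, \Delta m]$ by a family of $K$ overlapping translates $I_j = [a_j, a_j + \delta m]$ with spacing $(\delta - \delta')m = (\delta-1)m/2$, arranged so that every sub-interval of $[0, \Delta m]$ of length at most $\delta' m$ lies in some $I_j$. The number $K$ depends only on $\Delta$ and $\delta$. Now let $\gamma = (x_0, 0), \dots, (x_n, n)$ be a vertical crossing of $[0, \Delta m] \times [0, n]$, and set $L = \min_i x_i$, $R = \max_i x_i$. Two cases arise:
\begin{itemize}
\item \emph{Thin case} ($R - L \leq \delta' m$): $\gamma$ lies entirely in some $I_j \times [0, n]$, realising the event $A_j := \{I_j \times [0, n] \text{ is crossed vertically}\}$ of probability $\leq V_p(\delta m, n) \leq y$.
\item \emph{Wide case} ($R - L > \delta' m > m$): since $\gamma$ visits both $x = L$ and $x = L + m$, extracting the sub-path of $\gamma$ between the first visits to these two levels yields a horizontal left--right crossing of some rectangle $[L, L+m] \times [s, s']$ with $s' - s \leq n < \delta n$.
\end{itemize}

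\textbf{Handling the wide case.} A horizontal crossing of $[L, L+m] \times [s, s']$ with $s' - s \leq n$ gives, by inclusion of rectangles, a horizontal crossing of $[L, L+m] \times [s, s + \delta n]$, which has probability $H_p(m, \delta n) \leq y$. To cover all such sub-crossings by only $K' = O(1)$ events, I take translates of slightly enlarged covering rectangles $[b_l, b_l + m] \times [c_l, c_l + \delta n]$, with overlap in both $x$ and $y$ chosen using the slack $\delta - \delta' > 0$ between $\delta'$ and $\delta$. Monotonicity of $H_p$ in its arguments guarantees that every event $B_l$ in this family still has probability at most $H_p(m, \delta n) \leq y$.

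\textbf{Conclusion via SRT.} The $N := K + K'$ increasing events $\{A_j\}_j \cup \{B_l\}_l$ each have probability at most $y$, and their union contains the event ``$[0, \Delta m] \times [0, n]$ is crossed vertically''. Applying SRT gives
\[
V_p(\Delta m, n) \;\leq\; 1 - (1 - y)^{N}.
\]
The analogous construction — covering $[0, \Delta n]$ by translates of $[0, \delta n]$ in the $y$-direction and extracting a vertical sub-crossing of height $m \times \delta n$ from any ``tall'' horizontal crossing — yields the same bound for $H_p(m, \Delta n)$. Taking $C := N = C(\Delta, \delta)$ then gives the desired inequality with $\ucg{easy}(x) = 1 - (1-x)^C$.

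\textbf{Main obstacle.} The technical heart is the wide-case covering: a priori, the sub-rectangles $[L, L+m] \times [s, s + \delta n]$ range over $O(mn)$ possible positions, which would give a useless bound if used naïvely. Reducing this to a family of size depending only on $\Delta$ and $\delta$ requires carefully choosing slightly enlarged covering rectangles and exploiting the gap $\delta - \delta' > 0$, so that a finite overlapping cover in both coordinate directions suffices; the monotonicity of $H_p$ then ensures every covering event retains probability $\leq H_p(m, \delta n)$.
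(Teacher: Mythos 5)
Your overall strategy is the same as the paper's: split a vertical crossing of $[0,\Delta m]\times[0,n]$ according to whether its horizontal extent is small (then it stays inside one of boundedly many width-$\delta m$ boxes) or large (then it crosses a width-$m$ box horizontally), and conclude with the square-root trick; the paper's collections $\mathcal F$ and $\mathcal E$, with spacing $\varepsilon m$, $\varepsilon<\delta-1$, play exactly the role of your $I_j$ and $[b_l,b_l+m]$. However, your treatment of the wide case has two genuine problems. First, the extraction claim is false as stated: if the path reaches the level $x=L+m$ before it ever reaches $x=L$ (start in the middle, drift right, then come back left), the sub-path between the first visits to these two levels runs from right to left and may exit the strip $[L,L+m]$ on the right, so it need not be a left--right crossing of any width-$m$ rectangle; what one gets in general is a left--right \emph{or} a right--left crossing of the strip. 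In the oriented model these are distinct events (of equal probability, by reflection), so your family must include right-to-left crossing events as well --- this is why the paper's union consists of $3K$ events. With only left--right events $B_l$, the union does not contain the vertical-crossing event and the SRT step collapses.

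Second, the covering mechanism you propose for the wide case does not work as described. A sub-crossing of $[L,L+m]\times[s,s']$ cannot be \emph{contained} in a covering rectangle $[b_l,b_l+m]\times[c_l,c_l+\delta n]$ unless $b_l=L$, since there is no slack in the width; and enlarging the width would not help, because then the sub-crossing would no longer join the two vertical sides of the covering rectangle, so the event ``$B_l$ is crossed horizontally'' would not be triggered. The correct mechanism (and the paper's) is not containment but a forced crossing of a deterministic strip: in the wide case the horizontal extent of the path exceeds $\delta' m=m+\tfrac{\delta-1}{2}m$, so if the left endpoints $b_l$ are spaced at most $\tfrac{\delta-1}{2}m$ apart, some strip satisfies $L\le b_l$ and $b_l+m\le R$; since the path moves by $\pm1$ horizontally and visits both extremes, it must contain a horizontal crossing, in one of the two directions, of the deterministic box $[b_l,b_l+m]\times[0,n]$, whose probability is at most $H_p(m,n)\le H_p(m,\delta n)$ by monotonicity in the height. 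Note also that no covering in the $y$-direction is needed at all: everything lives in $[0,n]$, and the passage from height $n$ to $\delta n$ is handled purely by this monotonicity. With these repairs your argument becomes the paper's proof, with $C$ equal to the total number ($3K$) of covering events.
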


\begin{proof}
  Let us present the proof for $V_p(\Delta m, n)$ (the proof for $H_p(m,\Delta n)$ can be adapted easily). Set $\varepsilon< (\delta-1)$ and an integer $K>\Delta/\varepsilon$. We may assume without loss of generality that $\varepsilon n$ and $\delta n$ are two integers.

Define the two collections of boxes
  \begin{align*}
    \mathcal{F} &= \big\{ [k\varepsilon m,(k\varepsilon +\delta)m]\times[0, n],0\le k< K \big\},\\
    \mathcal{E} &= \big\{ [k\varepsilon m,(k\varepsilon+1)m]\times[0,n], 0\le k<  K \big\}.
  \end{align*}
For $[0,\Delta m]\times[0,n]$ to be crossed vertically, then one of the boxes in $\mathcal F$ must be crossed vertically, or one of the boxes in $\mathcal E$ must be crossed from left to right, or one of the boxes in $\mathcal E$ must be crossed from right to left. In other words, the event that $[0,\Delta m]\times[0,n]$ is crossed vertically is contained in the union of $3K$ events of probability smaller or equal to $V_p(\delta m, n)$ and $H_p(m,n)(\le H_p(m,\delta n))$. The square-root trick implies that
$$V_p(\Delta m,n)\le   1-(1-x)^{3K},$$
where $x:=\max\big\{V_p (\delta m, n),H_p(m,\delta n) \big\}$. The proof follows by setting $C=3K$.
\end{proof}

\begin{remark}\label{rmk:correlation}
Combined with Theorem~\ref{t:box_crossing} below, Lemma~\ref{lem:from 2 to alpha} shows that for any $\Delta>1$, $H(w_n,\Delta n)$ and $V(\Delta w_n,n)$ are bounded by $1-c(\Delta)<1$ uniformly in $n\ge1$.
\end{remark}

\section{The box-crossing property}

This section is devoted to the proof of Theorem~\ref{t:box_crossing}.
With the help of the RSW result from the previous section, the proof of the theorem is not more than a proper definition for $w_n$. Theorem~\ref{t:RSW} does the work for us, since it enables us to invoke two classical results on crossing probabilities (see the lemma below), which are somehow not specific to oriented percolation.
\nc{c:exp_decay}
\begin{lemma}[finite size criteria for $p<p_c$ and $p>p_c$]
  \label{t:Kesten}
 There exists $\eta > 0$ such that for $p \in (0, 1)$ and $m, n \geq 1$,
 \begin{itemize}
 \item If $\max\{V_p(2m, n),H_p(m,2n)\} < \eta$,
  then $p < p_c$ and there exists $c>0$ such that for any $N\ge1$,
  \begin{equation*}
    \mathbb{P}_p \big( 0 \lr \ell_N \big) \leq \exp(-c N).  \end{equation*}
\item If $\min\{V_p (m, 2n) ,H_p (2m, n)\} > 1 - \eta$,
  then $p>p_c$ and there exists $c>0$ such that for any $N\ge1$,
  \begin{equation}
    \label{e:exp_decay_super_2}
    \mathbb{P}_p \big( 0 \lr \ell_N, 0 \not \lr \infty \big) \leq \exp (-c N).
  \end{equation}
  \end{itemize}
\end{lemma}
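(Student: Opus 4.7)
These are classical finite-size criteria for oriented percolation, in the spirit of Kesten's criterion in the subcritical regime and Grimmett--Marstrand's criterion in the supercritical regime of two-dimensional Bernoulli percolation. My plan is to prove both parts by block renormalization.

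For the subcritical part, the starting point is the product inequality from independence of disjoint horizontal strips: a vertical crossing of $[0,2m]\times[0,Kn]$ forces independent vertical crossings of each of the $K$ substrips $[0,2m]\times[jn,(j+1)n]$, so $V_p(2m,Kn)\le V_p(2m,n)^K\le\eta^K$, and analogously $H_p(Km,2n)\le\eta^K$. To control $\mathbb{P}_p(0\to\ell_N)$, I would then declare a renormalized site $(i,j)\in\mathbb{Z}^2$ \emph{bad} if the block $[(2i-1)m,(2i+1)m]\times[jn,(j+1)n]$ is crossed vertically or crossed horizontally; by the hypothesis and the monotonicity $H_p(2m,n)\le H_p(m,2n)<\eta$ (taller is easier to cross horizontally and wider is harder), each site is bad with probability $\le 3\eta$, and disjoint blocks give independence (or finite-range dependence after allowing a small overlap). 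Using Lemma~\ref{lem:from 2 to alpha} if necessary to enlarge the effective $m$, the event $\{0\to\ell_N\}$ forces a chain of bad blocks of length $\ge N/n$ in the renormalized lattice, which by standard stochastic domination by subcritical i.i.d. Bernoulli site percolation (e.g.\ Liggett--Schonmann--Stacey) has probability $\le\exp(-cN)$ for $\eta$ sufficiently small.

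For the supercritical part, the argument proceeds dually. A renormalized site $(i,j)$ is declared \emph{good} if its associated block, of size comparable to $2m\times 2n$, contains open paths realizing both a vertical crossing matching $V_p(m,2n)$ and a horizontal crossing matching $H_p(2m,n)$, arranged (with suitable overlap) so that good neighboring sites automatically glue their crossings into longer open paths of the underlying model. By FKG and the hypothesis, each site is good with probability $\ge 1-2\eta$, and good sites form a finite-range dependent process of density $\ge 1-2\eta$. For $\eta$ small, Liggett--Schonmann--Stacey domination shows this contains a supercritical i.i.d. Bernoulli site percolation; hence the good sites percolate, producing an infinite open cluster in the original oriented lattice, so $p>p_c$. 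The event $\{0\to\ell_N,\,0\not\to\infty\}$ then requires the finite cluster of $0$ to be separated from the good-site infinite cluster by a barrier of bad sites of size $\ge cN$, whose probability is $\le\exp(-cN)$ by the subcriticality of the bad sites.

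The main obstacle is the supercritical case, since oriented percolation lacks planar duality: one cannot invoke a surrounding dual circuit to produce the ``barrier'', and must instead construct it by hand using the renormalized bad sites, taking care that the gluing of good neighbours respects the orientation of the edges. A secondary subtlety in the subcritical case is that the naive bad-block argument, when summed over the horizontal positions of the path between levels $jn$ and $(j+1)n$, can force $\eta$ to depend on $n$; to obtain a universal $\eta$ one uses Lemma~\ref{lem:from 2 to alpha} to enlarge $m$ until the effective aspect ratio is balanced, so that the bad-site chain estimate closes with a dimension-free constant.
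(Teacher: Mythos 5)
The supercritical item is where your proposal has a genuine gap. Everything quantitative hinges on the assertion that, on $\{0\lr\ell_N,\,0\not\lr\infty\}$, the finite cluster of $0$ must be separated from the infinite cluster of good renormalized sites by a ``barrier'' of bad sites of length at least $cN$. You never construct this barrier, and you yourself flag it as the main obstacle (precisely because there is no planar duality to produce it), so the bound \eqref{e:exp_decay_super_2} --- the whole content of the second item --- is asserted rather than proved. Two further points would need care even before that: first, an infinite cluster of good sites for \emph{unoriented} adjacency is not enough, since crossings in good blocks only concatenate upwards; you need an infinite \emph{oriented} path of good renormalized sites, so the Liggett--Schonmann--Stacey domination must beat the critical point of \emph{oriented} percolation on the renormalized lattice. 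Second, percolation of good sites only gives $p\ge p_c$ (and exponential decay in the first item only gives $p\le p_c$); strictness in both items requires the extra remark that the hypotheses are strict inequalities on finite-box crossing probabilities, hence persist at some nearby $p'$, which you omit. The paper's proof avoids the barrier altogether: it declares the renormalized \emph{edges} $(x,x\pm(1,1))$ open when $B_x$ is crossed vertically and $B_x^{\pm}$ horizontally, so the renormalized process is itself a $3$-dependent \emph{oriented} percolation on $\mathbb L$; Theorem~(7.65) of \cite{Gri99} dominates it from below by i.i.d.\ oriented percolation of density close to $1$, and the exponential estimate for $\{0\lr\ell_N,\,0\not\lr\infty\}$ at such densities is imported from the contour-counting argument of \cite{Dur84}, Section~10. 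To complete your route you would have to either reprove that contour estimate for your good/bad sites or cite it as the paper does.

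The subcritical item can be made to work your way, but the decisive step --- ``$\{0\lr\ell_N\}$ forces a chain of bad blocks of length $\ge N/n$'' --- is only usable if the bad blocks produced in successive strips form a \emph{connected} path in the renormalized lattice (this requires overlapping blocks and a notion of ``bad'' whose probability remains $O(\eta)$ after enlarging boxes via Lemma~\ref{lem:from 2 to alpha}); otherwise the union bound over their horizontal positions reintroduces the dependence on $n$ and $N$ that you are trying to avoid. You correctly identify this subtlety, but its resolution is exactly the content that is missing from your sketch. Note that the paper's argument for this item is shorter and needs no Peierls counting or stochastic domination at all: setting $u_k=\max\{H_p(m_k,2n_k),V_p(2m_k,n_k)\}$ with $m_k=2^km$ and $n_k=2^kn$, splitting a crossing at scale $k+1$ into two disjoint crossings at scale $k$ and applying Lemma~\ref{lem:from 2 to alpha} yields $u_{k+1}\le (Cu_k)^2$, hence $u_k\le e^{-2^k}$ once $\eta<1/(eC^2)$, and then $\mathbb{P}_p(0\lr\ell_N)\le 3u_K\le e^{-cN}$. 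I would either adopt that scale-doubling recursion or write out the connectivity of your bad chain in full.
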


Before proving this lemma, let us show the theorem. Recall that we omit the subscript $p_c$.
\begin{proof}[Proof of Theorem~\ref{t:box_crossing}]
Let $n\ge1$ large enough. Set $\eta$ to be the constant in the previous lemma.  Fix any $\alpha \in (3/4, 1)$ and $\varepsilon = \varepsilon(\alpha)>0$ as in Theorem~\ref{t:RSW}.
Introduce
\begin{equation}\label{eq:abca}
w_n:=\inf\big\{m\ge 0: H(\alpha m,\varepsilon n)\le V(m,\alpha \varepsilon n)\big\}.
\end{equation}
Note that $w_n$ diverges with $n$.
Introduce the following notation:
\begin{align*}
H_-&:=H(\alpha (w_n-1),\varepsilon n) &H_+:=H(\alpha w_n,\varepsilon n)\\
V_-&:=V(w_n-1,\alpha\varepsilon n)&V_+:=V(w_n,\alpha \varepsilon n).
\end{align*}
The definition of $w_n$ implies that $H_+\le V_+$ and $H_->V_-$.
\medbreak
\noindent{\bf Proof of the lower bound.} Choosing $\delta \in (1, 1/\alpha)$, Lemma~\ref{lem:from 2 to alpha} implies that
\begin{equation*}
  \begin{split}
    \max\{V(2w_n,n),H(w_n,2n)\}& \leq \ucg{easy}\big( V(\delta\alpha w_n, \alpha \varepsilon n) \vee H(\alpha w_n, \delta \alpha \varepsilon n) \big)\\
    & \leq \ucg{easy}(\max\{V_-,H_+\}).
  \end{split}
\end{equation*}
The first item of Lemma~\ref{t:Kesten} thus implies that $\max\{V_-,H_+\}\ge \ucg{easy}^{-1}(\eta)$. This gives that either $H_->V_-\ge \ucg{easy}^{-1}(1 - \eta)$, or $V_+\ge H_+\ge \ucg{easy}^{-1}(\eta)$. In either case, Theorem~\ref{t:RSW} may be applied to get
$$\min\{V(w_n,3n),H(3w_n,n)\}\ge \ucg{g:RSW}(\ucg{easy}^{-1}(\eta)).$$
\medbreak
\noindent{\bf Proof of the upper bound.} Theorem~\ref{t:RSW} implies that
\begin{equation*}\min\{V(3w_n,n),H(w_n,3n)\}\ge \ucg{g:RSW}(\min\{V_+,H_-\}).\end{equation*}
The second item of Lemma~\ref{t:Kesten} thus implies that $\min\{V_+,H_-\}\le \ucg{g:RSW}^{-1}(1-\eta)$. This gives that either $H_+\le V_+\le\ucg{g:RSW}^{-1}(\eta)$, or $V_- <H_-\le \ucg{g:RSW}^{-1}(1-\eta)$. In either case, Lemma~\ref{lem:from 2 to alpha} may be applied to get
$$\max\{V(3w_n,n),H(w_n,3n)\}\le \ucg{easy}(\ucg{g:RSW}^{-1}(1-\eta)).$$
\end{proof}

\begin{proof}[Proof of Lemma~\ref{t:Kesten}]{\bf Proof of the first item.} Introduce the sequence of scales
$m_k = 2^k m$ and $ n_k = 2^k n$
  for $k \geq 0$ and set
  \begin{equation*}
    u_k = \max \big\{ H_p (m_k, 2 n_k), V_p(2 m_k, n_k) \big\}.
  \end{equation*}
A vertical crossing of the box $[0,4m_{k}]\times[0,2n_{k}]$ must contain vertical crossings of the boxes $[0,4m_k]\times[0,n_k]$ and $[0,4m_k]\times[n_k,2n_k]$. Lemma~\ref{lem:from 2 to alpha} (and the trivial bound $\ucg{easy}(x):=1-(1-x)^C\le Cx$) thus implies that
$V_p(2m_{k+1},n_{k+1})\le (Cu_k)^2.$
Doing the same with $H_p(m_{k+1},2n_{k+1})$, we deduce that
  \begin{equation*}
    u_{k + 1} \leq (Cu_k)^2.
  \end{equation*}
By choosing $\eta<\tfrac1{eC^2}$ small enough, $u_0<\eta$ implies that
$u_k\le \exp(-2^k)$ for any $k\ge0$.
To conclude, fix $N\geq 1$ and let $K$ be the unique integer such that $n_K\le N<2n_K$. The event $0\rightarrow\ell_N$ implies that one of the three rectangles  $[-m_{K}, m_{K}] \times [0, n_{K}]$, $[0,m_{K}]\times[2 n_{K}]$ or $[-m_{K}, 0] \times [0, 2 n_{K}]$ must be crossed ``in the easy direction'', we deduce that
  \begin{equation*}
    \mathbb{P}_p \big( 0 \lr \ell_N\big) \leq 3 u_{K}\leq \exp \{ -c N\},
  \end{equation*}
for a constant $c>0$ small enough. This finishes the proof of exponential decay.

The fact that $p<p_c$ follows from
the observation that the condition $\max\{V_p(2m, n),H_p(m,2n)\}< \eta$ is satisfied for some $p'>p$, and that therefore $p<p'\le p_c$.
\bigbreak
\noindent{\bf Proof of the second item.}  For this proof, we consider a dependent percolation defined on a renormalized lattice.
  More precisely, given integers $m, n \geq 1$, associate to every $x = (i, j) \in \mathbb{L}$ the boxes
  \begin{align*}
      B_x &:= [0,m]\times[0,2n]~+~ (im, jn),\\
      B_x^+ &:= [0,2m]\times[0,n] ~+~ \big( im, (j + 1)n \big),\\
  B_x^- &:= [0,2m]\times[0,n]~+~ \big( (i - 1)m, (j + 1)n \big).
    \end{align*}

  Say that the edge $(x, x + (1, 1))$ is {\em open} if $B_x$ is crossed vertically and $B_x^+$ is crossed from left to right.
  Analogously, say that the edge $(x, x + (-1, 1))$ is {\em open} if $B_x$ is crossed vertically and $B_x^-$ is crossed from right to left.   Denote the induced percolation measure $\mathbb{P}^{m, n}_p$.

  On the event that there is an infinite path of open edges starting from the origin (using the above definition), then there is also an infinite open path on the original lattice, starting from $[0,m]\times[0,n]$.

  Note that the above percolation measure is $3$-dependent, as defined below (7.60) of \cite{Gri99}.
  Therefore, using a result by Liggett, Schonmann and Stacey (see Theorem~(7.65) of \cite{Gri99}), we conclude that there exists an $\varepsilon > 0$ such that
  if
  \begin{equation}
    \label{eq:142}
    \mathbb{P}^{n, m}_p \Big( \big( (0, 0), (1, 1) \big) \text{ is open} \Big) > 1 - \varepsilon,
  \end{equation} then there exists $c>0$ such that for every $N\ge1$,
$$\min\{V_p(N, 2N), H_p(2N, N)\} \geq 1 - \exp(-c N),$$
(see for instance the contour counting argument presented in Section~10 of \cite{Dur84} for additional details). The claim follows since \eqref{eq:142} is directly implied by the assumption in the statement.

The above implies that $p > p_c$ since $\min\{V_p(m, 3n),H_p(3m,n)\}>1-\eta$ is satisfied for some $p'<p$.
\end{proof}

\section{Proofs of the main theorems}
\subsection{Relation between $R_n$ and $w_n$}
\label{sec:stand-devi-r_n}

In this section we use the box-crossing property to show that $w_n$ is equal up to constant to several quantities related to $R_n$. The two first items below will be useful to obtain polynomial bounds on $w_n$. The last two items are useful to get the main theorems. Below, $x^+=\max\{x,0\}$.

\begin{proposition}
  \label{prop:equiv}
  \nc{c:1}\nc{c:3}\nc{c:4}\nc{c:5}
  There exist constants $\uc{c:1},\uc{c:3},\uc{c:4},\uc{c:5}>0$ such that for every $n\ge1$,
  \begin{enumerate}[$(i)$]
  \item\label{item:1} $\uc{c:1} w_n \le \mathbb E(R_n^+)  \le \frac1{\uc{c:1}}w_n$,
  \item\label{item:2} $\uc{c:3} w_n \le \sqrt{\Var(R_n)} \le \frac1{\uc{c:3}}w_n$,
  \item\label{item:3} $\uc{c:4} w_n \le \mathbb E(R_n\:\vert\:0\lr\ell_n) \le \frac1{\uc{c:4}}w_n$,
  \item\label{item:4} $\uc{c:5} w_n \le \sqrt{\Var(R_n \:\vert\:0\lr\ell_n)} \le \frac1{\uc{c:5}}w_n$.
  \end{enumerate}
\end{proposition}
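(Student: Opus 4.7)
The plan is to derive all four comparisons from matching tail estimates for $R_n$ at $p=p_c$: an upper bound $\mathbb{P}(R_n \ge L) \le C\exp(-cL/w_n)$ for $L \ge 1$, a matching lower bound $\mathbb{P}(R_n \ge c' w_n) \ge c'$, and an analogous upper bound $\mathbb{P}(R_n \le -L) \le C\exp(-cL/w_n)$ on the left tail. Given these, items $(i)$ and $(ii)$ follow from the tail-to-moment identities $\mathbb{E}(R_n^+) = \sum_{L \ge 1}\mathbb{P}(R_n \ge L)$ and $\mathbb{E}((R_n^+)^2) = \sum_{L \ge 1}(2L-1)\mathbb{P}(R_n \ge L)$ together with their analogues for $R_n^-$; the variance lower bound $\sqrt{\Var(R_n)} \ge c w_n$ is obtained by exhibiting two disjoint constant-probability events $\{R_n \ge a w_n\}$ and $\{R_n \le b w_n\}$ with $a > b$, separating $R_n$ by $\asymp w_n$.

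First, the lower bound $\mathbb{P}(R_n \ge c' w_n) \ge c'$ comes from Theorem~\ref{t:box_crossing}: the estimate $V(w_n, 3n) \ge \uc{c:box_crossing}$ combined via \eqref{eq:FKG} with an easy horizontal connection from the bottom half-line $(-\infty, 0] \times \{0\}$ to the lower-left corner of the vertical crossing yields an open path from some $(y, 0)$ with $y \le 0$ ending at height $n$ with first coordinate $\ge c' w_n$. The left tail estimate is equally direct: placing $\lfloor L/(3 w_n)\rfloor$ horizontally-spaced disjoint $w_n$-wide boxes along the bottom half-line produces independent vertical crossing events each of probability $\ge \uc{c:box_crossing}$, and the occurrence of any one of them forces the rightmost reach at height $n$ to sit above $-L + c'' w_n$. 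The heart of the proof is the right-tail estimate $\mathbb{P}(R_n \ge L) \le C\exp(-cL/w_n)$: the event $\{R_n \ge L\}$ forces an open path through $[0, L] \times [0, n]$ entering from its left or bottom boundary and exiting on the right, and to strengthen Remark~\ref{rmk:correlation} (which gives only a constant bound) into geometric decay in $L/w_n$, I would iterate over $\lfloor L/(3 w_n)\rfloor$ vertical slabs of width $3 w_n$, exposing at each step the rightmost open crossing of the slab (a conditioning technique analogous to the topmost-crossing construction used in the proof of Lemma~\ref{lem:hor}), so that the continuation into the next slab reduces to an independent crossing event in the unexplored region.

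Finally, items $(iii)$ and $(iv)$ follow by comparing the unconditional and conditional laws of $R_n$. For the lower bounds, since both $\{R_n \ge c' w_n\}$ and $\{0 \to \ell_n\}$ are increasing, \eqref{eq:FKG} gives $\mathbb{P}(R_n \ge c' w_n \mid 0 \to \ell_n) \ge \mathbb{P}(R_n \ge c' w_n) \ge c'$, yielding the lower bounds on the conditional mean and variance. For the upper conditional bounds, I would adapt the right-tail exploration to the conditional setting, again controlled by the box-crossing property applied to unexplored regions, to derive $\mathbb{P}(R_n \ge L \mid 0 \to \ell_n) \le C\exp(-c L/w_n)$. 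The main obstacle is the geometric right-tail estimate (both unconditional and conditional): a single application of Theorem~\ref{t:box_crossing} gives only a constant-in-$L$ bound, and upgrading it to geometric decay requires a clean conditional exploration that keeps the box-crossing property applicable in the unexplored region at each step, which is particularly delicate in the oriented setting since paths cannot revisit previously visited heights.
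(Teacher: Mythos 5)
Your overall architecture (two-sided tail bounds of scale $w_n$ plus a matching lower bound $\mathbb{P}(R_n\ge c'w_n)\ge c'$) is close to the paper's for items $(i)$--$(ii)$, but two points are genuine gaps. First, the estimate you call the heart of the proof, $\mathbb{P}(R_n\ge kw_n)\le C e^{-ck}$, is left as a sketch, and the exploration of rightmost slab-crossings you propose is not needed: on $\{R_n\ge k w_n\}$ there is an open path from $(-\infty,0]\times\{0\}$ to height $n$ ending at abscissa $\ge kw_n$, and since each oriented step changes the first coordinate by exactly one, this path contains a left-right crossing of each of the $k$ disjoint slabs $[iw_n,(i+1)w_n]\times[0,n]$; these crossing events use disjoint edge sets, hence are independent, so $\mathbb{P}(R_n\ge kw_n)\le H(w_n,n)^k\le(1-\uc{c:box_crossing})^k$ follows in one line from the box-crossing property. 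One wants an upper bound on an intersection of independent events, so no conditioning is required; your worry that "upgrading to geometric decay requires a delicate conditional exploration" is misplaced, and as written the key quantitative step of your proof is simply absent. The same applies to the conditional tail: the paper bounds $\mathbb{P}(R_n\ge kw_n,\,0\lr\ell_n)\le H(w_n,n)^{k-1}\,\mathbb{P}(0\lr\partial B)$ with $B=[-w_n,w_n]\times[0,n]$ and then, crucially, compares $\mathbb{P}(0\lr\partial B)\le \uc{c:box_crossing}^{-2}\,\mathbb{P}(0\lr\ell_n)$ by gluing two vertical crossings with FKG; your "adapt the exploration to the conditional setting" does not address this comparison. (Also, your lower bound for $\mathbb{P}(R_n\ge c'w_n)$ invokes an "easy horizontal connection" along the bottom half-line: there are no horizontal open paths at a fixed height in this oriented model, so you must glue a vertical crossing started on $[-\tfrac12 w_n,0]\times\{0\}$, a vertical crossing ending on $[2w_n,\tfrac52 w_n]\times\{n\}$ and a left-right crossing of the same box, which intersect by orientation/planarity, as in the paper's event $E$.)

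The more serious gap is in the lower bounds of $(iii)$ and $(iv)$. Your FKG step correctly gives $\mathbb{P}(R_n\ge c'w_n\mid 0\lr\ell_n)\ge c'$, but this does not yield either lower bound. For the conditional mean, $R_n$ may be negative on $\{0\lr\ell_n\}$, and your tail bounds only give $\mathbb{E}(R_n^-\mid 0\lr\ell_n)=O(w_n)$ with an uncontrolled constant, so the inequality $\mathbb{E}(R_n\mid 0\lr\ell_n)\ge c'^2w_n-O(w_n)$ proves nothing; the paper removes the negative contribution by a symmetry argument, working on a reflection-invariant event $F$ forcing $R_n\ge 2w_n$ and showing $\mathbb{E}(R_n\mathds 1_{F^c\cap\{0\lr\ell_n\}})\ge 0$ by comparing $R_n$ with the leftmost point $L_n$ (using $R_n\ge L_n$ and that $-L_n$ has the law of $R_n$). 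For the conditional variance, a lower bound requires anti-concentration: besides $\{R_n\ge 2w_n\}$ you need an event such as $\{R_n\le\tfrac32 w_n\}$ with conditional probability bounded below, and this is a \emph{decreasing} event, so FKG against the increasing conditioning $\{0\lr\ell_n\}$ goes the wrong way and your single bound cannot produce it. The paper needs a dedicated construction here: two vertical arms from $[-\tfrac12 w_n,0]\times\{0\}$ and $[0,\tfrac12 w_n]\times\{0\}$ to $\ell_n$ inside $\tilde B=[-\tfrac12w_n,\tfrac12w_n]\times[0,n]$ together with $0\lr\partial\tilde B$ (which forces $0\lr\ell_n$ and has probability comparable to $\mathbb{P}(0\lr\ell_n)$), intersected with an independent blocking event in $[\tfrac12w_n,\tfrac32w_n]\times[0,n]$. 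Without these two ingredients, your proof of the conditional statements does not go through.
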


The proof of this proposition is heavily based on the box-crossing property. In particular, we will use several times the following event $E$, whose probability is bounded from below using the box-crossing property. Let $B=[-\tfrac12w_n,\tfrac52w_n]\times[0,n]$ and define $E$ (see Fig.~\ref{construction}) to be the event that there exist an open path from $[-\tfrac12w_n, 0]\times \{0\}$ to $[2w_n,\tfrac52w_n]\times\{n\}$ in $B$.
The event $E$ occurs if there exit
    \begin{itemize}
    \item a vertical crossing from $[-\tfrac12w_n, 0]\times \{0\}$ to the top side of $B$,
    \item a vertical crossing from the bottom side of $B$ to $[2w_n,\tfrac52w_n]\times\{n\}$,
    \item a left-right crossing of $B$.
    \end{itemize}
    By the box-crossing property and symmetry, the two first paths exist with probability larger than $\uc{c:box_crossing}/2$, and the third with probability larger than $\uc{c:box_crossing}$. The FKG inequality implies that
    \begin{equation}
      \label{eq:17}
    \bbP(E)\ge\tfrac14 \uc{c:box_crossing}^3.
    \end{equation}
      \begin{figure}[h]
    \label{construction}
    \centering
 \includegraphics[width=1.00\textwidth]{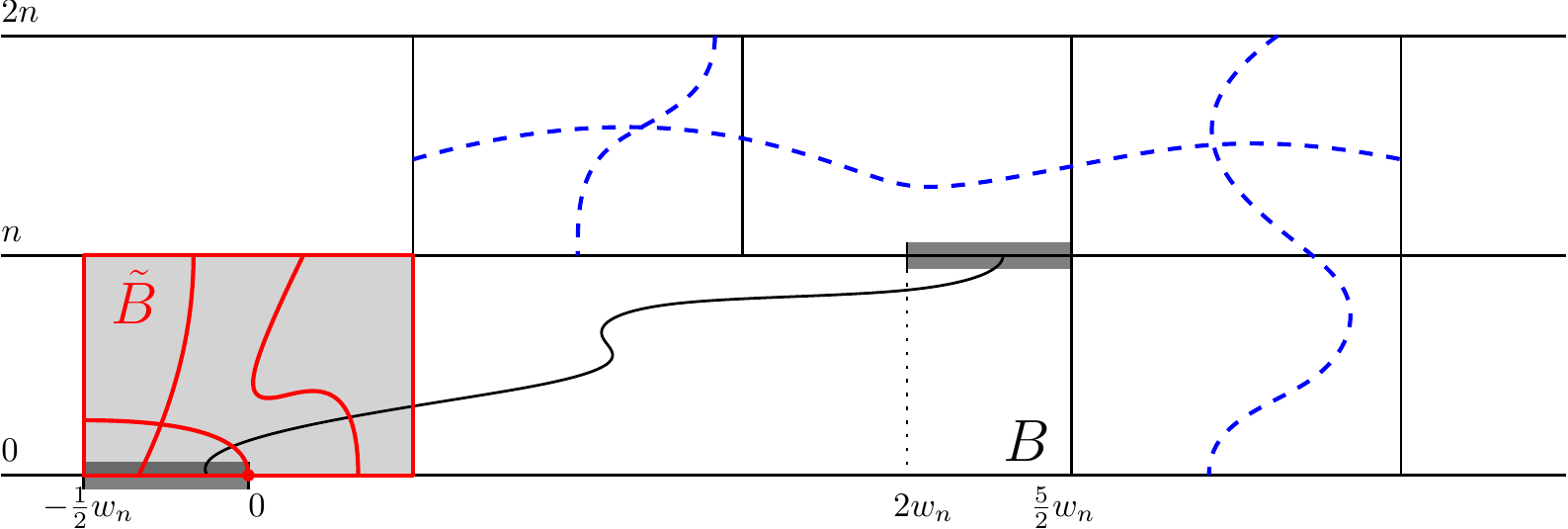}
    \caption{The crossing in black illustrates the occurrence of the event $E$. The red picture illustrates the bound $\bbP(0\rightarrow\ell_n)\ge \tfrac{\uc{c:box_crossing}^2}{4}\bbP(0\rightarrow \partial\tilde B)$: paths combine to obtain a path from 0 to $\ell_n$: if $0\rightarrow \partial \tilde B$ implies the existence of a path from $0$ to $\partial \tilde B\setminus \ell_n$, then the paths from $[-\tfrac12 w_n,0]\times\{0\}$ or $[0,\tfrac12 w_n]\times\{0\}$ would cross it to create a path from $0$ to $\ell_n$ in $\tilde B$. The dotted blue lines denoted dual path (which are not necessarily oriented) preventing the existence of oriented crossings.}
  \end{figure}

    We are now in a position to attack the proof of Proposition~\ref{prop:equiv}.
\begin{proof} We prove each item one after the other.
\bigbreak\noindent(\ref{item:1})
The lower bound is immediate since
    \begin{equation}
      \label{eq:18}
   \bbE(R_{n}^+)\ge 2w_n\bbP [R_{n}^+\ge 2w_n]\ge 2w_n\bbP(E)\stackrel{\eqref{eq:17}}\ge 2w_n\cdot \tfrac14 \uc{c:box_crossing}^3.
    \end{equation}%    Using Lemma~\ref{lem:useful}, the equation above implies that
%    $\bbE(R_{n}^+)\ge \uc{c:7}\tfrac{\uc{c:box_crossing}^3}4 w_{n}$, which
%    proves the desired lower bound when $n$ is even. The case of odd $n$ follows from the case of even $n$.
  The upper bound follows directly from the following exponential bound on the tail of $R_{n}$: for every $k\in \bbN$,
    \begin{equation}
      \label{eq:23}
      \bbP(R_{n} \ge kw_n)\le (1-\uc{c:box_crossing})^k,
    \end{equation}
which is obtained as follows. If $R_n\ge k w_n$, then there must exist an open path from left to right inside the rectangle $[0,kw_n]\times[0,n]$. In particular $k$ disjoint rectangles of size $w_n$ by $n$ must be crossed from left to right by an open path. This observation and independence imply that
    \begin{equation*}
      \label{eq:19}
      \bbP(R_{n}\ge k w_n) \le H(k w_n,n)\le H(w_n,n)^k\le H(w_n,3n)^k.
    \end{equation*}
The box-crossing property implies \eqref{eq:23}. By summing over $k$, we find that
    \begin{equation*}
      \bbE (R_n^+)\le w_n\sum_{k=0}^\infty\bbP(R_{n}\ge k w_n) \le \frac1{\uc{c:box_crossing}} w_n,
    \end{equation*}
    which gives the desired upper bound.
\bigbreak\noindent (\ref{item:2}) By \eqref{eq:18}, we already know that
$\bbP(R_{n}\ge 2w_{n})\ge \frac{\uc{c:box_crossing}^3}4$. Since $R_{n}\le w_n$ on the event that $[0,w_n]\times[0,n]$ is not crossed from left to right, we deduce that $\bbP(R_{n}\le w_n)\ge \uc{c:box_crossing}$. This directly implies that the lower bound on the standard deviation. %so that  in order to prove the desired lower bound on the standard deviation of $R_{n}$, it suffices to show that $R_{n}\le w_n$ with probability larger than a positive constant independent of $n$.
% Together with , this shows that the standard deviation of $R_{2n}$ satisfies
%    \begin{equation}
%      \label{eq:22}
%      \sqrt{\Var(R_{2n})}\ge \frac{\uc{c:box_crossing}^{3/2}\uc{c:7}}4 w_{2n}.
%    \end{equation}
%    As before, the odd case can be easily deduced from the even case.

    The upper bound follows once again from the following exponential bound on the tail of $\vert R_{n}\vert$: for every $k\in \bbN$,
    \begin{equation}
      \label{eq:23}
      \bbP(\vert R_{n} \vert \ge kw_n)\le (1-\uc{c:box_crossing})^k.
    \end{equation}
    The contribution of $R_n\ge0$ is controlled by \eqref{eq:19}. For the contribution of $R_n\le 0$, observe that  $R_n\le -kw_n$ implies that the rectangle $[-kw_n,0]\times[0,n]$ is not crossed vertically. In particular, $k$ disjoint rectangles of size $w_n$ by $n$  fail to be crossed vertically. Using independence, the box crossing property implies
    \begin{equation*}
      \label{eq:24}
      \bbP(R_n\le -kw_n)\le 1-V(kw_n,n)\le (1-V(w_n,n))^k\le (1-\uc{c:box_crossing})^k.
    \end{equation*}
\bigbreak
\noindent(\ref{item:3}) We use a technique similar to the proof of (\ref{item:1}). The lower bound is slightly more delicate here because we do not take the positive part of $R_n$ and we therefore have to show that the negative part does not counterbalance the positive part. To achieve this, we use that the law of the cluster of $0$ is invariant by the orthogonal reflection $\sigma$ with respect to the vertical axis $y=0$.

Let $F$ be the intersection of the event $E$ and its image by $\sigma$.
%\begin{itemize}
%\item an open path from $(-\infty,0]\times \{0\}$ to $[w_n,\infty)\times \{n\}$,
%\item an open path from $[0,\infty) \times \{0\}$ to $(-\infty,-w_n] \times \{n\}$.
%\end{itemize}
%Observe that the first path exists when the event $A$ occurs -- recall that $A$ was defined in the proof of (\ref{item:1}) which has probability larger than $\uc{c:box_crossing}^3/4$. By symmetry, the second path also exists with probability larger than $\uc{c:box_crossing}^3/4$.
The FKG inequality together with \eqref{eq:17} implies that the event $F$ occurs with probability larger than $\uc{c:box_crossing}^6/16$.

Now, if $0$ is connected to $\ell_n$ and $F$ occurs, then $R_n$ must be larger than $2w_n$. Therefore,
\begin{align*}
    \bbE(R_n\mathds 1_{F\cap \{0\lr\ell_n\}})&\ge 2w_n\bbP(F \cap \{0\lr\ell_n\})\stackrel{(FKG)}\ge \tfrac{\uc{c:box_crossing}^6}{8}w_n\bbP(0\lr\ell_n).\label{eq:25}
\end{align*}
Furthermore, by invariance of $F$ under symmetry,
 \begin{align*}
%  \label{eq:26}
  \bbE(R_n\mathds 1_{F^c\cap\{0\lr\ell_n\}})&=\tfrac12 \bbE(R_n\mathds 1_{F^c\cap\{0\lr\ell_n\}}) - \tfrac12 \bbE(L_n\mathds 1_{F^c\cap\{0\lr\ell_n\}})\ge 0,
\end{align*}
where $L_n$ is the left-most point of $\ell_n$ connected to $0$.

Summing the two displayed equations above and dividing by $\bbP(0\lr\ell_n)$ gives
\begin{equation}
  \bbE(R_n \:\vert\:0\lr\ell_n)\ge\tfrac{\uc{c:box_crossing}^6}{8}w_n.
\end{equation}
For the upper bound, we use an exponential domination as in \eqref{eq:24}. The only difference is that here we have to take care of the conditioning. Let $k\ge1$. If $(R_n\ge k w_n, 0 \rightarrow \ell_n)$, then there must exist an open path from $0$ to the boundary $\partial B$ of the box $B=[-w_n,w_n]\times[0,n]$, and a left-right crossing of $[w_n,kw_n]\times [0,n]$. Using independence and the box-crossing property, we obtain
\begin{align}
  \nonumber
  \bbP(R_n\ge kw_n, 0 \rightarrow \ell_n)& \le  H(w_n,n)^{k-1}\bbP(0\lr \partial B)\\
  & \le  (1-\uc{c:box_crossing})^{k-1}\bbP(0\lr \partial B).\label{eq:27}
\end{align}
To conclude the proof, we need to compare the probability of an open path from $0$ to $\partial B$ with the probability of an open path from $0$ to $\ell_n$. We use the following observation.  If $0$ is connected to $\partial B$ and the two rectangles $[-w_n,0]\times[0,n]$ and $[0,w_n]\times[0,n]$ are crossed vertically by open paths, then $0$ is connected to  $\ell_n$. The FKG inequality and the box crossing property imply
\begin{align}
   \bbP(0\lr \ell_n)\stackrel{\rm (FKG)}\ge V(w_n,n)^2\bbP(0\lr \partial B) \ge  \uc{c:box_crossing}^2\bbP(0\lr \partial B).\label{eq:28}
\end{align}
Plugging the inequality in \eqref{eq:27} and dividing by $\bbP(0\lr \partial B)$ gives
\begin{equation}
\bbP(R_n\ge kw_n\:\vert\: 0\lr \partial \ell_n)\le (1-\uc{c:box_crossing})^{k-1}/\uc{c:box_crossing}^2,\label{eq:31}
\end{equation}
which gives the claim after summing over $k$.
%and summing over $k$ finally gives
%\begin{equation}
%  \label{eq:30}
%  \bbE(R_n^0\:\vert\: 0\lr \partial \ell_n)\le (1+\frac1{\uc{c:box_crossing}^3})w_n.
%\end{equation}
\bigbreak\noindent(\ref{item:4}) Lower bound - We already know from the previous part that $R_n\ge 2w_n$ with (conditional) probability larger than constant, so that we only need to prove that $\bbP(R_n\le \tfrac32 w_n|0\rightarrow \ell_n)\ge c$. In order to see that, let $\tilde B=[-\tfrac12w_n,\tfrac12w_n]\times[0,n]$ (see Fig.~\ref{construction}) and the event that
\begin{itemize}
\item[(i)] $[0,\tfrac12 w_n]\times\{0\}$ and $[-\tfrac12 w_n,0]\times\{0\}$ are both connected to $\ell_n$ by a path in $\tilde B$, and $0$ is connected to the boundary of $\tilde B$,
\item[(ii)] $[\tfrac12 w_n,\tfrac32 w_n]\times[0,n]$ is not crossed from left to right.
\end{itemize}
By symmetry and the box-crossing property, each of the two first paths are occurring with probability $\tfrac12\uc{c:box_crossing}$, therefore, the FKG inequality implies that the events in (i) occur with probability larger or equal to
$$\bbP((i)\text{ occurs})\ge\tfrac14\uc{c:box_crossing}^2\bbP[0\rightarrow \tilde B]\ge \tfrac14\uc{c:box_crossing}^2\bbP[0\rightarrow\ell_n].$$
Since the event in (ii) does not depend on edges in $\tilde B$, the box-crossing property implies
$$\bbP(R_n\le \tfrac32w_n|0\rightarrow \ell_n)\ge \frac{\bbP((i)\text{ occurs})\bbP((ii)\text{ occurs})}{\bbP[0\rightarrow\ell_n]}\ge \tfrac14\uc{c:box_crossing}^3.$$%Define $\tilde B=[-\tfrac12w_n,\tfrac52 w_n]\times[0,n]$ and the event $\tilde A$ that there exist
%    \begin{itemize}
%    \item an open path from 0 to the boundary of $[-\tfrac12 w_n,\tfrac12 w_n]\times[0,n]$,
%    \item a vertical open path from the bottom side of $R$ to $[2w_n,\tfrac52w_n]\times\{n\}$,
%    \item an horizontal open path form the left side to the right side of $R$.
%    \end{itemize}
%The box-crossing property and the symmetry imply that
%$$\bbP(\tilde A)\ge
%Let $D$ be the event that
%
%$$\bbP(R_{n}\le w_n\:\vert\:0\lr \ell_n)\ge \bbP(R_n\ge w_n)\ge .$$The box-crossing property shows the existence of $c>0$ such that for every $n\ge1$, $\bbP(R_{n}\le 3w_n/2\:\vert\:0\lr \ell_n)\ge c$ and $\bbP(R_{n}\ge 2w_n\:\vert\:0\lr \ell_n)>c$.
  % The proof of these two estimates uses some constructions similar to the ones presented in the previous items. First, let us define $\calD$ as the even that there exists an open path from $0$ to $\ell_n$ inside the rectangle $[-w_n/2,w_n/2]\times[0,3n]$. Using the box-crossing property, symmetry and Harris-FKG inequality, one can proves that $\mathcal D$ holds with probability larger than $\tfrac{\uc{c:box_crossing}^2}4\bbP(0\lr \ell_n)$. Then, to prove Eq.~\eqref{eq:29}, consider the event obtained by intersecting $\mathcal D$ with the event that there is no left-right open crossing inside $[w_n/2,3w_n/2]\times[0,3n]$. This event implies $R_{3n}^0\le 3w_n/2$ and has probability larger than $\uc{c:box_crossing} \bbP(\calD)\ge \uc{c:9}\bbP(0\lr \ell_n)$.

The upper bound is a consequence of the following exponential domination. Recall that $L_n$ was defined in the proof of (\ref{item:2}) as the left-most point of $\ell_n$ connected to $0$ by an open path.  Using  $R_n\ge L_n$ and the fact that  $-L_n$ has the same law as $R_n$ (conditionally on the existence of an open path from $0$ to distance $n$), we obtain for every $k\ge1$
\begin{align}
  \label{eq:33}
  \bbP(|R_n|\ge kw_n\vert0\lr\ell_n )&\le \bbP(R_n\ge kw_n\vert0\lr\ell_n)+\bbP(-L_n\le -kw_n\vert0\lr\ell_n)\notag\\
  &\le 2\bbP(R_n\ge kw_n\vert0\lr\ell_n) \overset{\eqref{eq:31}}{\leq} 2(1-\uc{c:box_crossing})^{k-1}/\uc{c:box_crossing}^2.\notag
\end{align}
\end{proof}
\subsection{Polynomial bounds on $w_n$.}
\label{sec:lower-bound-typical}

We start by proving a polynomial lower bound on $w_n$ using the equivalence with $\sqrt{\Var(R_n)}$. %This argument can be found in \cite{??}. We recall it here due to the fact that the introduction of the quantity $w_n$ enables us to improve the bound slightly.
\nc{c:11}
\begin{proposition}\label{cor:polybound}
  Fix $n\ge1$.   There exists a constant $\uc{c:11}>0$ such that for every $n\ge1$,
  \begin{equation}
    \label{eq:44}
    w_n\ge \uc{c:11}n^{2/5}.
  \end{equation}
\end{proposition}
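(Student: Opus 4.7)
The plan is to derive the bound $w_n \ge \uc{c:11}\, n^{2/5}$ directly from the polynomial lower bound on $\mathbb{E}_{p_c}(R_n \mid 0 \lr \ell_n)$ established in \cite{DurSchTan89}, by invoking the equivalences collected in Proposition~\ref{prop:equiv}. As recalled in Section~\ref{s:intro}, Durrett, Schonmann and Tanaka proved that there exists $\delta > 0$ such that $\mathbb{E}_{p_c}(R_n \mid 0 \lr \ell_n) \ge \delta\, n^{2/5}$ holds for every $n \ge 1$. Their argument combines the polynomial lower bound $\mathbb{P}_{p_c}(0 \lr \ell_n) \ge c\, n^{-1/4}$ (obtained by a reflection/Peierls-type construction) with a first/second-moment estimate on the number of vertices of $\ell_n$ reachable from the origin.

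Once this input is granted, the proof is a one-line computation. The upper inequality in item~(\ref{item:3}) of Proposition~\ref{prop:equiv} reads $\mathbb{E}_{p_c}(R_n \mid 0 \lr \ell_n) \le (1/\uc{c:4})\, w_n$, hence
\begin{equation*}
\delta\, n^{2/5} \;\le\; \mathbb{E}_{p_c}(R_n \mid 0 \lr \ell_n) \;\le\; \frac{1}{\uc{c:4}}\, w_n,
\end{equation*}
giving the claim with $\uc{c:11} := \uc{c:4}\, \delta$. Equivalently, as announced at the beginning of this subsection, one may route the argument through item~(\ref{item:2}): the \cite{DurSchTan89} estimate can be rewritten as $\sqrt{\Var(R_n)} \ge c\, n^{2/5}$ via the equivalences (\ref{item:3}) and (\ref{item:2}), and this combined with $\sqrt{\Var(R_n)} \le (1/\uc{c:3})\, w_n$ yields the same conclusion. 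The heavy lifting has already been done in Proposition~\ref{prop:equiv}, which uses the box-crossing property to make $w_n$, $\mathbb{E}(R_n\mid 0\lr\ell_n)$ and $\sqrt{\Var(R_n)}$ mutually comparable; the role of the present proposition is simply to plug in the external polynomial bound from the literature.

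The only obstacle worth mentioning is the slight mismatch between the definition of $R_n$ used here (maximum over starting points $y \le 0$) and the variant customarily used in \cite{DurSchTan89} (the right-most vertex of the cluster of the origin alone). The two quantities differ by at most a multiplicative constant: an open path realizing the ``$y\le 0$'' version can be coupled to a path from $0$ at the cost of one box-crossing event in a rectangle of width $O(w_n)$, an event whose probability is bounded away from $0$ by Theorem~\ref{t:box_crossing}. This observation uses only the symmetry and FKG/box-crossing arguments already exploited in the proof of Proposition~\ref{prop:equiv}, so the transfer of the \cite{DurSchTan89} bound to our $R_n$ is routine.
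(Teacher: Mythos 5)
Your argument outsources the entire content of the proposition to a citation of \cite{DurSchTan89} that is not available in the form you need. The inputs that paper provides at every scale are the structural inequality \eqref{eq:1}, namely $\Var(R_n)\ge \uc{c:19}\,n\,\mathbb{P}(0\lr\ell_n)$, and the one-arm bound $\mathbb{P}(0\lr\ell_n)\ge \varepsilon n^{-1/4}$; feeding the latter into the former --- which is exactly the derivation you sketch --- yields a width of order $n^{3/8}$ for all $n$, not $n^{2/5}$. The exponent $2/5$ needs the improved one-arm bound $n^{-1/5}$, which, as the introduction recalls, was obtained in \cite{DurSchTan89} only along infinitely many scales, so the $n^{2/5}$ bound on the (conditional) width was likewise only known along a subsequence. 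A subsequence bound does not give Proposition~\ref{cor:polybound} for every $n$: no monotonicity or regularity of $n\mapsto w_n$ is established anywhere that would let you interpolate. Moreover, via item~(\ref{item:3}) of Proposition~\ref{prop:equiv}, the statement ``$\mathbb{E}(R_n\mid 0\lr\ell_n)\ge\delta n^{2/5}$ for every $n$'' is essentially equivalent to the proposition itself (and to the lower bound of Theorem~\ref{thm:2}), which is what this paper is proving; so your proof assumes, under a citation, the very estimate whose all-scales validity is the point.

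The paper closes this gap with a new ingredient that your proposal does not contain. A vertical crossing of $[0,w_n]\times[0,2n]$ must pass through some site $(x,n)$ with $0\le x\le w_n$, and by independence of the two halves each such event has probability at most $\mathbb{P}(0\lr\ell_n)^2$; summing over $x$ and using the box-crossing property gives \eqref{eq:47}, i.e.\ $\mathbb{P}(0\lr\ell_n)\ge \uc{c:box_crossing}/\sqrt{w_n}$ for every $n$. Plugging this and the bound $\sqrt{\Var(R_n)}\le w_n/\uc{c:3}$ from item~(\ref{item:2}) of Proposition~\ref{prop:equiv} into \eqref{eq:1} yields $w_n^{2}\gtrsim n\,w_n^{-1/2}$, hence $w_n\ge \uc{c:11} n^{2/5}$ at every scale; note that the exponent $2/5$ is produced by this bootstrap rather than imported. (A minor further point: the paper's $R_n$ is defined from the half-line $\mathbb{Z}_-\times\{0\}$, which is already the right-edge convention of \cite{DurSchTan89}, so the ``definition mismatch'' you address at the end is not where the difficulty lies.)
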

\nc{c:19}
\begin{proof}
The starting point of the proof is given by (1.8) in \cite{DurSchTan89}, which shows that there exists a constant $\uc{c:19}>0$ such that
 \begin{equation}
    \label{eq:1}
    \Var(R_n)\ge \uc{c:19}\,n\, \mathbb P [ 0\lr\ell_n].
  \end{equation}
We refer the reader to the original paper for the argument. Let us simply say that it exploits a renewal structure of the right-most open path from $\mathbb  Z_-\times \{0\}$ to $\ell_n$ (this path ends at $(R_n,n)$) by showing that between two consecutive renewal heights, the horizontal increment of the path has variance larger than $\tfrac14$, and then showing that the expected number of renewal heights is at least $\uc{c:19}\,n\,\mathbb P [0\lr\ell_n]$.
\medbreak
For $x\in \{0,\ldots,w_n\}$, let $E(x)$ be the event that there exists a vertical crossing in $B(w_n,2n)$ that goes through the point $(x,n)$. Note that our choice of the lattice implies that the event $E(x)$ is empty when $x$ has a different parity from $n$. On the event $E(x)$, there exists an open path starting from $\ell_0$ and ending at $(x,n)$, and a path starting from $(x,n)$ and ending on $\ell_{2n}$. Hence we have, by independence and symmetry,
  \begin{equation*}
    \label{eq:45}
    \mathbb P(E(x))\le \mathbb  P [0\lr \ell_n]^2.
  \end{equation*}
  Furthermore, the box-crossing property and the union bound imply
  \begin{equation*}
    \label{eq:46}
    \uc{c:box_crossing}\le V(w_n,2n)\le \sum_{0\le x\le w_n}\mathbb P(E(x)).
  \end{equation*}
  The combination of the two equations above finally gives
  \begin{equation}
    \label{eq:47}
    \mathbb  P [0\lr \ell_n]\ge \frac{\uc{c:box_crossing}}{\sqrt{w_n}}.
  \end{equation}
Inserting  the bound on the variance of $R_n$ obtained via (\ref{item:2}) of Proposition~\ref{prop:equiv} in \eqref{eq:1} gives
  \begin{equation}
    \label{eq:48}
    \left(\frac{w_n}{\uc{c:3}}\right)^2 \ge \Var(R_n) \ge \uc{c:19}\,n\,  \mathbb  P [0\lr \ell_n]\ge  \uc{c:19}\,n\,  \frac{\uc{c:box_crossing}} {\sqrt{w_n}},
  \end{equation}
   which concludes the proof.
\end{proof}

We now show a polynomial upper bound on $w_n$ using the equivalence with $\mathbb E(R_n^+)$. The proof is based on sub-additivity properties of $\mathbb E(R_n^+)$ (see e.g.~\cite{Dur84} for background).

\begin{proposition}\label{lem:sublinear}
  There exists a constant $\epsilon>0$ such that for every $n\ge1$,
  \begin{equation}
    \label{eq:9}
    \mathbb E(R_n^+)\le n^{1-\epsilon}.
  \end{equation}
\end{proposition}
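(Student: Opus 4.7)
My plan is to combine the classical subadditive inequality for $\bbE[R_n^+]$ with a strict improvement driven by the horizontal fluctuations of $R_n$. The subadditivity
$$\bbE[R_{n+m}^+] \le \bbE[R_n^+] + \bbE[R_m^+]$$
follows from the deterministic bound $R_{n+m} \le R_n + \tilde R_m$, where $\tilde R_m$ has the law of $R_m$ and is independent of $R_n$. Indeed, any open path realizing $R_{n+m}$ must cross height $n$ at some vertex $(z,n)$ with $z \le R_n$, so $R_{n+m}$ is bounded above by the rightmost reach at height $n+m$ starting from $(-\infty, R_n] \times \{n\}$; by translation invariance and the independence of the edges above and below height $n$, this upper bound has the same law as $R_n + \tilde R_m$. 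Applying $(a+b)^+ \le a^+ + b^+$ and taking expectations yields the subadditivity. Combined with the classical fact $\alpha(p_c) = 0$ from \cite{DurGri83}, which via Proposition~\ref{prop:equiv}(\ref{item:1}) translates to $\bbE[R_n^+]/n \to 0$, Fekete's lemma gives $\bbE[R_n^+] = o(n)$.

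Pure subadditivity cannot deliver a polynomial rate, so the decisive step is to upgrade this to a strict sub-multiplicative inequality $\bbE[R_{2n}^+] \le (2-\delta)\,\bbE[R_n^+]$ for some fixed $\delta > 0$. The identity
$$\bbE[X^+] + \bbE[Y^+] - \bbE[(X+Y)^+] = \bbE\bigl[(X^+ + Y^+)\,\mathbf{1}_{X+Y \le 0}\bigr] + \bbE\bigl[(X^- + Y^-)\,\mathbf{1}_{X+Y > 0}\bigr],$$
applied to independent copies $X, Y$ of $R_n$, shows that the positive part of the sum is strictly smaller than the sum of positive parts whenever the summands have opposite signs of size $\asymp w_n$. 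Given a uniform two-sided spread $\min\{\bbP(R_n \ge c w_n),\,\bbP(R_n \le -c w_n)\} \ge c'$, the right-hand side above is at least of order $w_n$, and since $\bbE[R_n^+] \asymp w_n$ by Proposition~\ref{prop:equiv}(\ref{item:1}), this gives the desired strict improvement. Iterating along doubling scales (and handling small $n$ trivially) then yields $\bbE[R_n^+] \le C\,n^{1-\epsilon}$ with $\epsilon = \log_2(2/(2-\delta)) > 0$.

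The main obstacle will be the uniform negative-tail bound $\bbP(R_n \le -c w_n) \ge c'$. The corresponding positive-tail bound $\bbP(R_n \ge c w_n) \ge c'$ is immediate from the box-crossing property and has already appeared in the proof of Proposition~\ref{prop:equiv}(\ref{item:1}). The negative tail is more subtle because $R_n$ is the maximum over the infinite half-line $\bbZ_- \times \{0\}$, which tends to bias it to the right. To establish it, I would combine the reflection symmetry $x \mapsto -x$, which relates $R_n$ to an analogous leftmost-reach variable with the same law, with a box-crossing barrier event at height $n$ of positive probability, in the spirit of the symmetrization argument used in the proof of Proposition~\ref{prop:equiv}(\ref{item:3}).
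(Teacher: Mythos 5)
Your overall architecture (subadditivity of $\bbE(R_n^+)$ plus a strict improvement $\bbE(R_{2n}^+)\le(2-\delta)\,\bbE(R_n^+)$ iterated along doubling scales) is exactly the paper's, and your coupling $R_{2n}\le R_n+\tilde R_n$ with an independent copy is the same as the paper's variables $R_{m,n}^+$. The gap is in the mechanism you propose for the strict improvement. First, the input you isolate, $\min\{\bbP(R_n\ge c w_n),\,\bbP(R_n\le -c w_n)\}\ge c'$, is not sufficient for your claim that the right-hand side of your identity is of order $w_n$: on the events contributing to $\bbE[(X^++Y^+)\mathds{1}_{X+Y\le 0}]$ or $\bbE[(X^-+Y^-)\mathds{1}_{X+Y>0}]$ the second variable must \emph{exceed the first in absolute value}, so after localizing the first variable in a window $[2w_n,Kw_n]$ (using $\bbP(R_n\ge 2w_n)\ge \tfrac14\uc{c:box_crossing}^3$ and the exponential tail) you need a tail bound at the matched depth, e.g.\ $\bbP(R_n\le -Kw_n)\ge c'(K)$ for a fixed large $K$, not merely at depth $c w_n$ with small $c$. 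Second, even the negative-tail bound itself — the ingredient you correctly flag as the main obstacle — is left unproved, and the tools you point to do not deliver it: the reflection $x\mapsto -x$ maps $R_n$ to the leftmost reach of the \emph{right} half-line $\bbZ_+\times\{0\}$, which gives no lower bound on $\bbP(R_n\le -cw_n)$; and the symmetrization in the proof of Proposition~\ref{prop:equiv}(\ref{item:3}) concerns the conditioned cluster of $0$, not the unconditioned front $R_n$. Note also that no such bound follows from the facts already established ($\bbE R_n\ge 0$, $\Var(R_n)\asymp w_n^2$, exponential tails are all compatible with $R_n\ge 0$ a.s.), so a genuinely new construction is required.

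The missing piece is repairable with the paper's toolkit: a barrier event of the type ``$[-(K+3)w_n,4w_n]\times[0,n]$ not crossed vertically, $[-(K+3)w_n,-Kw_n]\times[0,n]$ and $[w_n,4w_n]\times[0,n]$ not crossed from left to right'' forces $R_n\le -Kw_n$, has uniformly positive probability by the box-crossing property together with Remark~\ref{rmk:correlation} (needed because the wide box has aspect ratio $K+7$, not $3$), and FKG for decreasing events; but none of this appears in your proposal. The paper avoids the issue altogether: instead of asking $R_n$ to be very negative, it constructs the event $E\cap F$ in which a crossing event below height $n$ forces $R_n^+\ge 2w_n$ while a blocking event \emph{above} height $n$ and to the right forces the increment $R_{n,2n}^+$ to vanish and caps $R_{2n}^+\le\tfrac32 w_n$, yielding directly a saving of $\tfrac12 w_n$ with probability at least $\tfrac14\uc{c:box_crossing}^6$ and hence \eqref{eq:34}. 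As written, your argument therefore has a genuine gap at its decisive step, even though the surrounding skeleton is sound.
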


\begin{remark}This proposition, combined with (\ref{item:1}) of Proposition~\ref{prop:equiv}, immediately implies that
\begin{equation}\uc{c:1} w_n \le n^{1-\varepsilon}.\end{equation}\end{remark}
\begin{proof}
  \nc{c:10} The main step in the proof is to show that there exists a constant $\uc{c:10}>0$ such that for every $n\ge 1$,
 \begin{equation}
   \label{eq:34}
   \mathbb E (R_{2n}^+)\le (2-\uc{c:10}) \mathbb E (R_{n}^+).
 \end{equation}
 In order to compare $\mathbb E (R_{n}^+)$ with $\mathbb E (R_{2n}^+)$, it will be convenient to introduce the following more general variables. For $0\le m\le n$, define
\begin{equation}
  \label{eq:35}
  R_{m,n}^+:=\max\big\{\,0,\,\sup\{x\ge 0\:\text{s.t.}\: (-\infty,R_m^+] \times \{m\} \lr (x+R_m^+,n)\}\big\}.
\end{equation}
Note that $R_{0,n}^+=R_n^+$ for every $n\ge 0$. Before moving further, let us mention two other useful properties of these variables that follow from the definition. First, translation invariance and independence imply that
\begin{equation}
  \label{eq:36}
  \mathbb E(R_{m,n}^+)=\mathbb E(R_{n-m}^+).
\end{equation}
Furthermore, for every percolation configuration $\omega$, we have the following sub-additivity property
\begin{equation}
  \label{eq:37}
  R_{0,n}^+(\omega)\le R_{0,m}^+(\omega)+R_{m,n}^+(\omega).
\end{equation}
 Note that \eqref{eq:36} and \eqref{eq:37} already imply that for every $n$,
 \begin{equation}
   \label{eq:38}
   \mathbb E(R_{2n}^+) \le \mathbb E(R_{0,n}^+)+ \mathbb E(R_{n,2n}^+) =2\mathbb E(R_{n}^+).
 \end{equation}
 Hence, in order to prove \eqref{eq:34}, we need to show that the inequality above is not sharp. We do this by constructing an event on which $R_{2n}^+$ is significantly smaller than  $ R_{0,n}^+ +R_{n,2n}^+$.

Fix $n\ge 1$. Recall the definition of the event $E$ and let $F$ be the event (see Fig.~\ref{construction}) that
\begin{itemize}
\item $[\tfrac12 w_n,\tfrac32 w_n]\times[n,2n]$ and $[\tfrac52 w_n,\tfrac72w_n]\times[0,2n]$ are not crossed from left to right,
\item $[\tfrac12 w_n,\tfrac72 w_n]\times[n,2n]$ is not crossed vertically.
\end{itemize}
 The box-crossing property and the FKG inequality imply that $\bbP(F)\ge \uc{c:box_crossing}^3$. Since $E$ and $F$ depend on different sets of edges, independence and \eqref{eq:17} give
$$\bbP(E\cap F)\ge \tfrac14 \uc{c:box_crossing}^6.$$
Now, observe that when the event $E\cap F$ occurs, we have $R_{n}^+\ge 2w_n$, $R_{n,2n}^+=0$ and $R_{2n}^+\le \tfrac32w_n$. Hence,
\begin{equation*}
  \label{eq:42}
  \mathds 1_{E\cap F}R_{2n}^+\le \mathds 1_{E\cap F}( R_{n}^++R_{n,2n}^+ -\tfrac12w_n).
\end{equation*}
On the event $(E\cap F)^c$, the trivial bound provided by \eqref{eq:37} gives
\begin{equation*}
  \label{eq:41}
   \mathds 1_{(E\cap F)^c}R_{2n}^+\le \mathds 1_{(E\cap F)^c}( R_{n}^++R_{n,2n}^+).
\end{equation*}
Summing the two equations above and taking the expectation, we find
\begin{align*}
  \label{eq:43}
  \mathbb E (R_{2n}^+) &\le 2\mathbb E(R_{n}^+) -\tfrac12w_n \mathbb P(E\cap F)\le (2-\tfrac18\uc{c:box_crossing}^6 \uc{c:1}) \mathbb E(R_{n}^+).
\end{align*}
In the second inequality, we used the bound $w_n\ge \uc{c:1}\mathbb E(R_{n}^+)$ provided by Proposition~\ref{prop:equiv}. This finishes the proof of \eqref{eq:34}, which implies the statement of the proposition along the geometric sequence $n=2^k$. The general statement of \eqref{eq:9} follows by sub-additivity.
\end{proof}

\subsection{Proof of the main theorems}
\label{sec:proof-theor-refthm:2}

\begin{proof}[Proof of Theorem~\ref{thm:2}]
By Proposition~\ref{prop:equiv}, it is sufficient to get the similar bound for $w_n$.
The bounds then follows from Propositions~\ref{cor:polybound} and \ref{lem:sublinear}.
\end{proof}

\begin{proof}[Proof of Theorem~\ref{thm:1}]
The lower bound follows from \eqref{eq:47} and Proposition~\ref{cor:polybound}. We now focus on the upper bound.

First, the box-crossing property and the FKG inequality imply that the event $E_n$ defined (see Fig.~\ref{E_n}) by
\begin{itemize}
\item $[\tfrac12w_n,\tfrac32w_n]\times[0,2n]$ is not crossed from left to right,
\item $[-\tfrac32w_n,-\tfrac12w_n]\times[0,2n]$ is not crossed from right to left,
\item $[-\tfrac32w_n,\tfrac32w_n]\times[0,2n]$ is not crossed vertically,
\end{itemize}
satisfies
\begin{equation}\label{eq:222}
\bbP(E_n)\stackrel{(FKG)}\ge H(3w_n,n)V(w_n,3n)^2 \ge \uc{c:box_crossing}^3.
\end{equation}
Let $r\ge2$ be a large enough integer that we fix later and set $K:=\lfloor\log_r(n/2)\rfloor$.
For the event $0\rightarrow \ell_n$ to occur, none of the events $E_{r^k}$, $1\le k\le K$, should occur. Imagine for a moment that $w_{rn}> 3w_{n}$ for each $n$, then the events $E_{r^k}$, $1\le k\le K$, depend on different sets of edges, so that \eqref{eq:222} implies
\nc{a}$$\bbP(0\rightarrow \ell_n)\le \bbP\Big(\bigcap_{k=1}^{K} E_{r^k}^c\Big)=\prod_{k=1}^{K}(1-\bbP(E_{r^k}))\le (1-\uc{c:box_crossing}^3)^{K}\le n^{-\uc{a}}.$$
To conclude the proof, we therefore need to show that $w_{rn}\ge 3w_n$, or equivalently, by definition \eqref{eq:abca} of $w_{rn}$, that
$$H(\alpha 3w_n,\varepsilon r n)> V(3w_n,\alpha \varepsilon r n).$$
On the one hand, provided $r>1/\varepsilon$, monotonicity and the box-crossing property implies
$$H(\alpha 3w_n,\varepsilon r n)\ge H(3w_n,n)\ge \uc{c:box_crossing}.$$
On the other hand, if the box $[0,3w_n]\times[\alpha \varepsilon r n]$ is crossed vertically, then
$s=\lfloor r\alpha\varepsilon \rfloor$ disjoint boxes of width $3w_n$ and height $n$ must be crossed vertically (see Fig.~\ref{E_n}). Therefore,
$$V(3w_n,\alpha \varepsilon r n)\le V(3w_n,n)^s\le (1-\uc{c:box_crossing})^s.$$
Providing $ r$ large enough, we may guarantee that $(1-\uc{c:box_crossing})^s<\uc{c:box_crossing}$, and therefore $w_{rn}\ge 3w_n$ for every $n\ge1$.
\end{proof}

\begin{remark}
In order to obtain the slightly weaker bound
$$\bbP(0\rightarrow \ell_n)\ge \frac1{n^{(1-\varepsilon)/2}},$$
one may avoid the use of (1.8) in \cite{DurSchTan89} by simply combining the bound $w_n\le n^{1-\varepsilon}$ with \eqref{eq:47}.
\end{remark}
      \begin{figure}
    \label{E_n}
    \centering
 \includegraphics[width=1.00\textwidth]{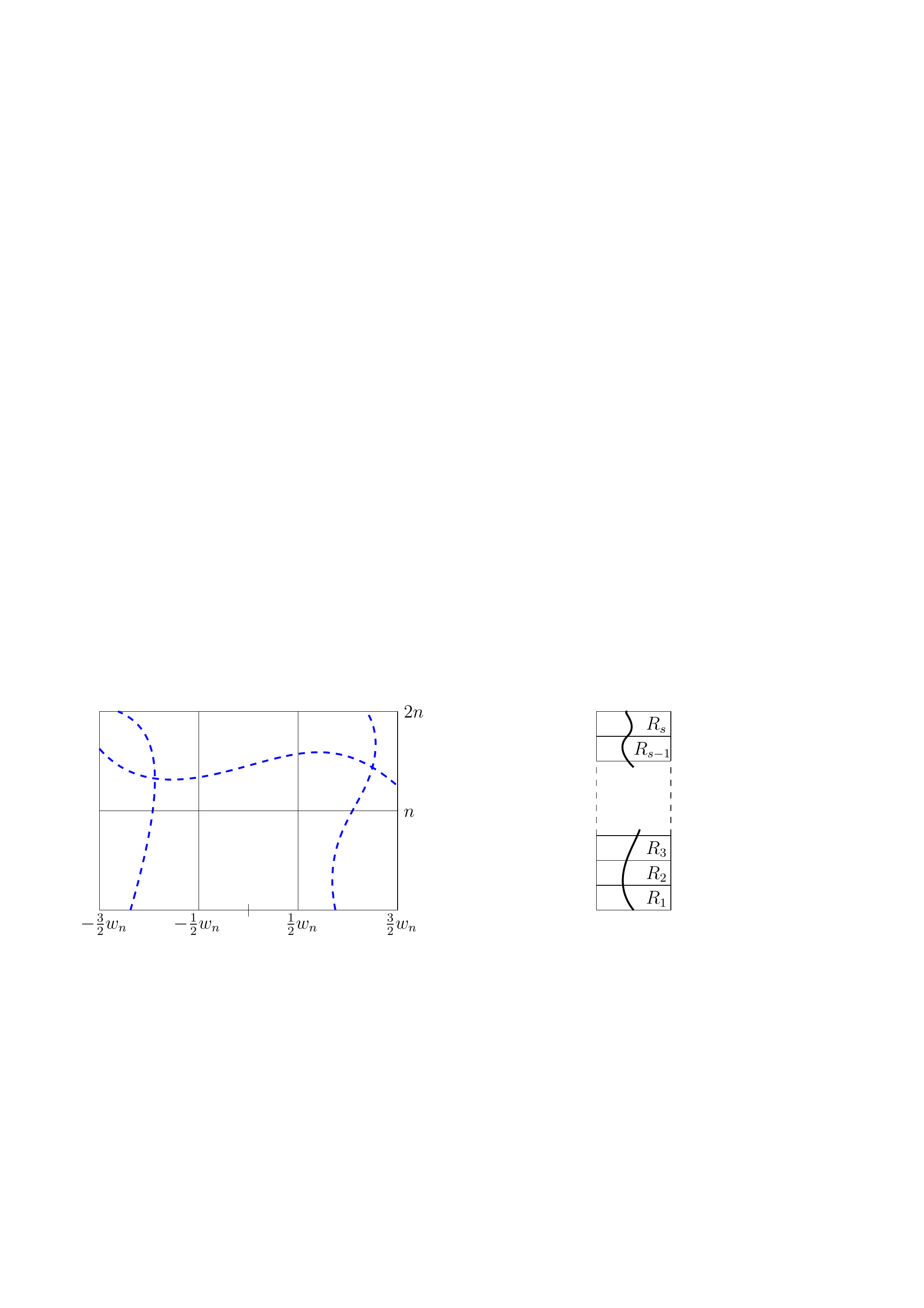}
    \caption{On the left, an illustration of the event $E_n$. Again, the blue dotted line denotes a dual path preventing the existence of an oriented path from $[-\tfrac12w_n,\tfrac12w_n]\times[0,n]$ to the outside of $[-\tfrac32w_n,\tfrac32w_n]\times[0,2n]$. On the right, if a box of width $3w_n$ and height $sn$ is crossed vertically, then $s$ rectangles of width $3w_n$ and height $n$ are crossed vertically.}
  \end{figure}
\bibliographystyle{amsalpha}
\bibliography{bibli}

\end{document}